\algnewcommand\algorithmicinput{\textbf{Input:}}
\algnewcommand\Input{\item[\algorithmicinput]}
\algnewcommand\algorithmicoutput{\textbf{Output:}}
\algnewcommand\Output{\item[\algorithmicoutput]}
\theoremstyle{plain}
\newtheorem{thm}{Theorem}
\newtheorem{cor}{Corollary}
\newtheorem{lem}{Lemma}
\theoremstyle{remark}
\newtheorem*{rem}{Remark}
\theoremstyle{definition}
\DeclareMathOperator*{\argmin}{argmin}
\title{Structural and Convergence Analysis of \\ Discrete-Time Denoising Diffusion Probabilistic Models}
\author[1]{Yumiharu Nakano\thanks{E-mail: nakano@comp.isct.ac.jp}}
\affil[1]{Department of Mathematical and Computing Science \protect \\ Institute of Science Tokyo}
\date{\today}
\begin{document}

\maketitle

\begin{abstract}
This paper studies the original discrete-time denoising diffusion
probabilistic model (DDPM) from a probabilistic point of view.
We present three main theoretical results.
First, we show that the time-dependent score function associated with the
forward diffusion process admits a characterization as the backward component
of a forward--backward stochastic differential equation (FBSDE).
This result provides a structural description of the score function and
clarifies how score estimation errors propagate along the reverse-time dynamics. 
As a by-product, we also obtain a system of semilinear parabolic PDEs for the score function.
Second, we use tools from Schr\"odinger's problem to relate distributional
errors arising in reverse time to corresponding errors in forward time.
This approach allows us to control the reverse-time sampling error in a
systematic way.
Third, combining these results, we derive an explicit upper bound for the
total variation distance between the sampling distribution of the
discrete-time DDPM algorithm and the target data distribution under general
finite noise schedules.
The resulting bound separates the contributions of the learning error and the
time discretization error.
Our analysis highlights the intrinsic probabilistic structure underlying
discrete-time DDPMs and provides a clearer understanding of the sources of
error in their sampling procedure.

\begin{flushleft}
{\bf Key words}:
Denoising diffusion probabilistic model, reverse-time stochastic differential equations,
Schr{\"o}dinger problem, forward-backward stochastic differential equations.
\end{flushleft}
\begin{flushleft}
{\bf AMS MSC 2020}: 60H30, 65C30, 60J60
\end{flushleft}
\end{abstract}

\section{Introduction}\label{sec:1}

Denoising diffusion probabilistic models (DDPMs), introduced by Ho, Jain,
and Abbeel~\cite{ho-etal:2020}, are a class of diffusion-based generative models
originating from the work of Sohl-Dickstein et al.~\cite{sohl-etal:2015}.
These models have shown strong empirical performance in a wide range of
applications, including computer vision~\cite{ho-etal:2022,li-etal:2022,
luo-hu:2021,men-etal:2022,ram-etal:2022,rom-etal:2022,sah-etal:2022,
yan-etal2:2023,zha-etal:2022}, medical imaging~\cite{chu-ye:2022,pen-etal:2022,
son-etal:2022}, time-series generation~\cite{tas-etal:2021,lop-str:2023},
audio and speech synthesis~\cite{che-etal:2021,kon-etal:2021,jeo-etal:2021,
liu-etal:2022}, and computational chemistry~\cite{lee-etal2:2023,luo-etal:2022,
xie-etal:2022}.
We refer to~\cite{cao-etal:2024,yan-etal:2023} for recent surveys on diffusion
models.

A DDPM consists of two stages.
In the forward stage, data are gradually perturbed by adding noise so that the
distribution converges to a simple reference distribution, typically a
Gaussian.
In the reverse stage, a sampling procedure is applied to transform noise back
into data samples.
In continuous-time formulations~\cite{son-etal:2020}, these stages are described
by stochastic differential equations (SDEs), and efficient sampling methods
based on probability flow ODEs and exponential integrators have been developed;
see, for example,~\cite{zha-che:2023}.
A key component of the reverse-time dynamics is the score function, which
approximates the gradient of the log-density of the intermediate distributions.

From a probabilistic perspective, the reverse-time dynamics of DDPMs can be
viewed as an approximation of a reverse-time SDE evolving over a finite time
interval.
Such reverse-time SDEs are closely related to the Schr\"odinger problem, which
concerns the most likely stochastic evolution between given endpoint
distributions; see, for instance,~\cite{leo:2013,chetrite-etal:2021}.
They are also related to forward--backward stochastic differential equations
(FBSDEs), which appear naturally in stochastic control theory.
These connections suggest that DDPMs provide a concrete algorithmic setting in
which classical probabilistic objects such as time reversal of diffusions,
Schr\"odinger bridges, and FBSDEs naturally arise.
At the same time, DDPMs involve additional difficulties, including
discrete-time approximation and the use of learned, imperfect score functions.

The aim of this paper is to study the original discrete-time DDPM sampling
algorithm from this probabilistic viewpoint.
Rather than focusing on the design of noise schedules or on convergence under
minimal assumptions, we concentrate on understanding the structure of the
discrete-time algorithm itself.
We consider general finite noise schedules without imposing structural
restrictions on their functional form.

Our analysis is based on three main results.
First, we show that the time-dependent score function associated with the
forward diffusion process can be characterized as the backward component of a
forward--backward SDE.
This result provides a structural interpretation of the score function and
offers a natural framework for analyzing how score estimation errors propagate
along the reverse-time dynamics. 
This FBSDE representation is not introduced as a technical tool, but as a way to describe how score estimation errors propagate along the reverse-time dynamics.
As a by-product of the forward--backward SDE representation, we also obtain
a nonlinear parabolic PDE characterization of the score function.

Second, we use techniques from the theory of Schr\"odinger bridges to relate
distributional errors arising in reverse time to corresponding errors in
forward time.
This approach allows us to control reverse-time sampling errors by quantities
that can be estimated along the forward diffusion.

Third, by combining these results, we derive an explicit upper bound for the
total variation distance between the sampling distribution produced by the
discrete-time DDPM algorithm and the target data distribution under general
finite noise schedules.
The obtained bound separates the contributions of the learning error and the
time discretization error, thereby clarifying the respective roles of learning
and numerical approximation in discrete-time DDPM sampling.

\subsection*{Related work}

Theoretical analyses of diffusion-based generative models have received
increasing attention in recent years.
Several works study convergence properties of score-based diffusion models in
continuous time, often under weak assumptions on the data distribution.
For example, De Bortoli et al.~\cite{de-etal:2021} and De Bortoli~\cite{de2022} derived total variation error bounds
for exponential-integrator-type discretizations of reverse-time SDEs under regularity assumptions,
though their results involve restrictive relationships between time-step size,
score error, and terminal time.
Lee et al.~\cite{lee-etal:2022} established convergence of an Euler--Maruyama approximation
under a log-Sobolev inequality assumption on the target density,
which essentially excludes multimodal distributions.
Subsequent work~\cite{lee-etal:2023} considered algorithms related to discrete-time DDPMs
but relied on nonstandard initialization and additional cutoff procedures.
Benton et al.~\cite{ben-etal:2023}
derived nearly dimension-free convergence bounds
using stochastic localization techniques,
and Conforti et al.~\cite{con-etal:2025}
proved KL convergence guarantees for score-based diffusion models
assuming only finiteness of the second moment.
These works focus primarily on continuous-time formulations
and emphasize convergence under minimal data assumptions,
using tools from entropy dissipation and stochastic localization.

Other studies have focused on continuous-time formulations with specific noise schedules,
such as constant coefficients~\cite{chen-etal:2023},
or on discrete-time variants where error bounds are derived at intermediate time steps
rather than for the final output distribution~\cite{li-etal:2023towards,li-yan:2024}.
Mbacke and Rivasplata~\cite{mba-riv:2023} obtained Wasserstein-type error bounds
for discrete-time diffusion models,
but employed learning objectives different from the standard score-matching loss. 

Another line of research investigates the role of noise schedules
in score-based diffusion models.
For instance, recent analyses such as Strasman et al.~\cite{str-etal:2024}
study general noise schedules from a continuous-time perspective,
focusing on stability and regularity properties of the underlying SDEs
and their implications for sampling performance.

In contrast, the present paper focuses on the original discrete-time DDPM
algorithm.
Our goal is not to optimize noise schedules or to improve convergence rates,
but to provide a structural analysis of the discrete-time sampling procedure
itself.
By combining a forward--backward SDE characterization of the score function
with Schr\"odinger bridge techniques, we obtain an error analysis that is
explicitly adapted to the discrete-time DDPM algorithm and its sampling output.

\subsection*{Contributions}

The main contributions of this paper are threefold:
\begin{itemize}
\item A structural characterization of the time-dependent score function
      as the backward component of a forward--backward SDE associated with
      the reverse-time dynamics. As a by-product, we also obtain 
a system of semilinear parabolic PDEs for the score function.

\item A Schr\"odinger bridge based estimate that connects reverse-time
      distributional errors with forward-time discrepancies.
\item An explicit total variation error bound for the original discrete-time
      DDPM algorithm under general finite noise schedules, separating learning
      and time-discretization errors.
\end{itemize}

\subsection*{Organization of the paper}
The remainder of the paper is organized as follows.
Section~\ref{sec:2} presents the structural characterization of the time-dependent score function. 
Section~\ref{sec:A} gives the convergence analysis of the original discrete-time DDPMs.

\subsection*{Notation}

Denote by $\nabla$ the gradient operator. We often write $\nabla_x$ for the gradient with respect to the variable $x$.
For a function $f$ on $[0,1]\times\mathbb{R}^d$ we denote by $\nabla f$ the gradient of $f$ with respect to the spatial variable.
We denote by $\partial_tf$ and $\partial_{x_j}f$ the partial derivatives of $f(t,x)$ with respect to
the time variable $t$ and $j$-th component $x_j$ of the spatial variable $x$ respectively.
Let $\mathcal{P}(\mathcal{X})$ be the set of all Borel probability measures on a Polish space $\mathcal{X}$.
Denote by $a^{\mathsf{T}}$ the transpose of a vector or matrix $a$.

\section{Structural characterization of the score function}\label{sec:2}

Let $(\Omega,\mathcal{F},\mathbb{P})$ be a complete probability space.
Let $\mu_{data}\in\mathcal{P}(\mathbb{R}^d)$ and $\{\alpha_i\}_{i=1}^n$ be a sequence such that $\alpha_i\in (0,1)$, $i=1,\ldots,n$.
Let $\mathbf{x}_0$ and $Z$ be random variables with $\mathbf{x}_0\sim \mu_{data}$ and $Z\sim N(0,I_d)$.
The forward Markovian dynamics $\{\mathbf{x}_i\}_{i=0}^n$ is described by
\[
 \mathbf{x}_i = \sqrt{\alpha_i}\mathbf{x}_{i-1} + \sqrt{1-\alpha_i}Z_i, \quad i=1,\ldots, n,
\]
where $\{Z_i\}_{i=1}^n$ is an IID sequence with $Z_1\sim N(0,I_d)$ that is independent of $\mathbf{x}_0$.
In other words, the conditional density $\mathbf{p}_i(x\,|\,\mathbf{x}_{i-1})$ of $\mathbf{x}_i$ given $\mathbf{x}_{i-1}$ is
the Gaussian density function of $x$ with mean vector $\sqrt{\alpha_i}\mathbf{x}_{i-1}$ and variance-covariance
matrix $(1-\alpha_i)I_d$, $i=1,\ldots,n$. Then
\[
 \mathbf{x}_i \sim \sqrt{\bar{\alpha}_i}\mathbf{x}_0+\sqrt{1-\bar{\alpha}_i}Z
\]
for each $i=1,\ldots,n$.

Let $\{z_i\}_{i=1}^n$ be a sequence of Borel measurable functions on $\mathbb{R}^d$, which is interpreted as the resulting denoising term in DDPM algorithm.
Let $\{\xi_i\}_{i=1}^n$ be an IID sequence on $(\Omega,\mathcal{F},\mathbb{P})$ with common distribution $N(0,I_d)$.
Define the sequence $\{\mathbf{x}^*_i\}_{i=0}^n$ of random variables by
\begin{equation}
\label{eq:2}
\left\{
\begin{aligned}
 \mathbf{x}^*_n &= \xi_n, \\
  \mathbf{x}^*_{i-1} &= \frac{1}{\sqrt{\alpha_i}}\left(\mathbf{x}^*_i - \frac{1-\alpha_i}{\sqrt{1-\bar{\alpha}_i}}z_i(\mathbf{x}^*_i)\right) + \sigma_i\xi_i,
 \quad i\in\{1,\ldots,n\}.
\end{aligned}
\right.
\end{equation}
where $\bar{\alpha}_i=\prod_{k=1}^i\alpha_k$ and $\sigma_i^2= (1-\alpha_i)/\alpha_i$.

\begin{rem}
In the original DDPM algorithm~\cite{ho-etal:2020},
no additional noise is injected at the final sampling step,
while in some variants one more Gaussian perturbation is added.
From a probabilistic viewpoint, this difference affects only the last iteration
and thus induces an error of the same order as one time step of the discretization.
Since our convergence bound already accounts for the overall time discretization error,
we do not distinguish between these two variants in the analysis.
\end{rem}

The learning objective in \eqref{eq:2} is formulated in this framework as follows:
\[
 \frac{1}{n}\sum_{i=1}^n\mathbb{E}|Z- z_i(\sqrt{\bar{\alpha}_i}\mathbf{x}_0+\sqrt{1-\bar{\alpha}_i}Z)|^2,
\]
which is the simplified version of the objective derived from the variational lower bound of the negative of log likelihood of generative models (see \cite{ho-etal:2020}).
It is also known that this objective is equivalent to the score-matching one.
More precisely, wth the function
\[
 \mathbf{s}_i(x):=-\frac{1}{\sqrt{1-\bar{\alpha}_i}}z_i(x)
\]
we get
\begin{equation}
\label{eq:2}
 \mathbb{E}|\mathbf{s}_i(\mathbf{x}_i)-\nabla\log \mathbf{p}_i(\mathbf{x}_i)|^2
 = \frac{1}{1-\bar{\alpha}_i}\mathbb{E}|z_i(\mathbf{x}_i) - Z|^2
   + \mathbb{E}\left[|\nabla\log\mathbf{p}_i(\mathbf{x}_i | \mathbf{x}_0)|^2 - |\nabla\log\mathbf{p}_i(\mathbf{x}_i)|^2\right],
\end{equation}
where $\mathbf{p}_i$ is the density of $\mathbf{x}_i$ and the score function $\nabla\log \mathbf{p}_i(\cdot)$ of $\mathbf{x}_i$, $i=1,\ldots,n$, is defined by
\begin{equation}
\label{eq:def_score}
 \nabla\log \mathbf{p}_i(x) =
 \begin{cases}
  \nabla \mathbf{p}_i(x) / \mathbf{p}_i(x), & \text{if} \;\mathbf{p}_i(x)>0, \\
  0 & \text{otherwise}
 \end{cases}
\end{equation}
(see \cite{chen-etal:2023} and Section \ref{sec:A} below for a proof).
Then the score-matching error $L$ in Section \ref{sec:1} is represented as
\[
 L= \frac{1}{n}\sum_{i=1}^n\mathbb{E}|\mathbf{s}_i(\mathbf{x}_i)-\nabla\log \mathbf{p}_i(\mathbf{x}_i)|^2.
\]

We make the following condition on $\mu_{data}$:
\begin{enumerate}
\item[(H1)] $\mu_{data}$ has a bounded density $p_{data}$ with weak derivatives up to second order on $\mathbb{R}^d$. That is, 
the gradient $\nabla p_{data}$ and the Hessian $\nabla^2 p_{data}$ exist as locally integrable functions. Moreover, 
\[
 |\nabla^2 \log p_{data}(x)| + |\nabla \log p_{data}(x) + Qx| \le c_0, \quad\text{a.e.}\; x\in\mathbb{R}^d
\]
for some $c_0>0$ and some symmetric positive definite matrix $Q\in\mathbb{R}^{d\times d}$, 
where $\nabla\log p_{data}$ is defined as in \eqref{eq:def_score} and $|A|$ stands for the Frobenius norm of $A\in\mathbb{R}^{d\times d}$. 
\end{enumerate}

\begin{rem}
Suppose that
\[
 p_{data}(x)\,\propto\, e^{-\frac{1}{2}x^{\mathsf{T}}Qx -U(x)}, \quad x\in\mathbb{R}^d,
\]
where $U$ is a $C^2$-function on $\mathbb{R}^d$ with bounded derivatives.
Then (H1) is satisfied. This is also true for the density $p_{data}$ of the form
\[
p_{data}(x)\,\propto\, e^{-\frac{1}{2}x^{\mathsf{T}}Qx -U(x)}1_S(x), \quad x\in\mathbb{R}^d,
\]
where $S$ is a bounded open subset of $\mathbb{R}^d$ with Lipschitz boundary.
\end{rem}

Let $c_1$ be a given positive constant that is greater than the maximum eigenvalue of $Q$.
The condition (H1) leads to a linear growth of the score function $\nabla\log \mathbf{p}_i(x)$ of the forward process.
\begin{lem}
\label{lem:score_bdd}
Suppose that $(\mathrm{H1})$ hold. Then the function $\nabla\log\mathbf{p}_i(x)$ satisfies
\[
 |\nabla \log \mathbf{p}_i (x)|\le \frac{c_0}{\sqrt{\bar{\alpha}_i}} + \frac{c_1}{\bar{\alpha}_i}|x|,  \quad x\in\mathbb{R}^d, \;\; i=1,\ldots,n.
\]
\end{lem}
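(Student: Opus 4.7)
The plan is to extract two different identities for the score $\nabla\log\mathbf{p}_i(x)$ and combine them linearly. Write $a_i=\sqrt{\bar\alpha_i}$ and $b_i^2=1-\bar\alpha_i$, so that $\mathbf{x}_i=a_i\mathbf{x}_0+b_iZ$ and $\mathbf{p}_i=p_0\ast\psi_i$, where $p_0(y)=a_i^{-d}p_{data}(y/a_i)$ is the density of $a_i\mathbf{x}_0$ and $\psi_i$ is the centered Gaussian density with covariance $b_i^2I_d$. Because $\psi_i$ is strictly positive and smooth, so is $\mathbf{p}_i$, hence \eqref{eq:def_score} coincides with $\nabla\mathbf{p}_i/\mathbf{p}_i$ and the degenerate case never arises.

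For the first identity I would rescale (H1), which gives in the distribution sense
\[
 \nabla p_0(y) = -\frac{1}{\bar\alpha_i}\,Qy\,p_0(y) + s(y),\qquad |s(y)|\le \frac{c_0}{\sqrt{\bar\alpha_i}}\,p_0(y).
\]
Differentiating $\mathbf{p}_i=p_0\ast\psi_i$ through the smooth factor $\psi_i$ and transferring the derivative back onto $p_0$ by integration by parts (justified in the distribution sense by boundedness of $p_{data}$ and the Schwartz decay of $\psi_i(x-\cdot)$), I obtain
\[
 \nabla\mathbf{p}_i(x) = -\tfrac{1}{\bar\alpha_i}Q\!\int y\,p_0(y)\psi_i(x-y)\,dy + \int s(y)\psi_i(x-y)\,dy.
\]
Dividing by $\mathbf{p}_i(x)$ and identifying the first integral with $\mathbf{p}_i(x)\mathbb{E}[a_i\mathbf{x}_0\mid\mathbf{x}_i=x]$ yields
\[
 \nabla\log\mathbf{p}_i(x) = -\tfrac{1}{\bar\alpha_i}\,Q\,\mathbb{E}[a_i\mathbf{x}_0\mid\mathbf{x}_i=x] + \varepsilon(x),\qquad |\varepsilon(x)|\le c_0/\sqrt{\bar\alpha_i}.
\]
The second identity is Tweedie's formula, obtained by instead differentiating $\mathbf{p}_i=p_0\ast\psi_i$ through the Gaussian factor:
\[
 \mathbb{E}[a_i\mathbf{x}_0\mid\mathbf{x}_i=x] = x + (1-\bar\alpha_i)\nabla\log\mathbf{p}_i(x).
\]

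Substituting the second identity into the first produces the linear equation
\[
 \Bigl(I+\tfrac{1-\bar\alpha_i}{\bar\alpha_i}Q\Bigr)\nabla\log\mathbf{p}_i(x) = -\tfrac{1}{\bar\alpha_i}Qx + \varepsilon(x).
\]
Since $Q$ is symmetric positive definite with spectrum contained in $(0,c_1)$, the left-hand matrix commutes with $Q$, is positive definite, and has inverse of operator norm at most $1$; moreover $(I+((1-\bar\alpha_i)/\bar\alpha_i)Q)^{-1}(Q/\bar\alpha_i)$ has operator norm $\max_{\lambda\in\sigma(Q)}\lambda/(\bar\alpha_i+(1-\bar\alpha_i)\lambda)\le c_1/\bar\alpha_i$. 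Applying these two norm bounds together with the triangle inequality gives exactly the claimed bound. The only step requiring attention is the distributional integration by parts producing the first identity; this is routine from (H1) and the Gaussian decay of $\psi_i$, so I view the essential content as the pairing of Tweedie's formula with the rescaled gradient bound from (H1).
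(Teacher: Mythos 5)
Your proof is correct and is essentially the same as the paper's (which proves the continuous-time generalization, Lemma~\ref{lem:score_bdd2}, and specializes at $t=t_i$): both transfer the gradient onto $p_{data}$ by integration by parts, invoke (H1), and then eliminate the spatial variable to obtain the implicit linear equation $\bigl(I+\tfrac{1-\bar\alpha_i}{\bar\alpha_i}Q\bigr)\nabla\log\mathbf{p}_i(x)=-\tfrac{1}{\bar\alpha_i}Qx+\varepsilon(x)$ before inverting the positive-definite matrix. Your re-organization of that second step through the posterior mean and Tweedie's formula is a cleaner conceptual packaging of the paper's direct Gaussian-kernel manipulation, but the mathematical content and the final operator-norm bounds are identical.
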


Denote by $C$ generic constants only depending on $c_0$, $c_1$, and $\mathbb{E}|\mathbf{x}_0|^2$, which may vary from line to line.
First, let us represent $\mathbf{x}^*$ as an exponential integrator type time discretization of a reverse-time SDE.
To this end, take the linear interpolation $g(t)$ of $\{0, -\log\alpha_1,\ldots,-\sum_{i=1}^n\log\alpha_i\}$ on $\{t_0,t_1,\ldots,t_n\}$,
where $t_i=i/n$. That is, $g$ is the piecewise linear function such that
$g(t_0)=0$, $g(t_i)=-\sum_{k=1}^i\log\alpha_k$, $i=1,\ldots,n$.
Then, define $\beta=g^{\prime}$. This leads to
\[
 \alpha_i=e^{-\int_{t_{i-1}}^{t_i}\beta_rdr}, \quad i=1,\ldots,n,
\]
and so
\[
 \bar{\alpha}_i = e^{-\int_0^{t_i}\beta_rdr}, \quad i=1,\ldots,n.
\]
Note that since $-\log\alpha_i> 0$ the function $\beta$ is nonnegative.

Let $\mathbb{F}=\{\mathcal{F}_t\}_{0\le t\le 1}$ be a filtration with the usual conditions, i.e.,
$\mathcal{F}_t=\bigcap_{u>t}\mathcal{F}_u$ and $\mathcal{F}_0\supset \mathcal{N}$, where $\mathcal{N}$ denotes the collection of
$\mathbb{P}$-null subsets from $\mathcal{F}$.
Let $\{W_t\}_{t\ge 0}$ be a $d$-dimensional $\mathbb{F}$-Brownian motion. 
Assume that $\mathbf{x}_0$ is $\mathcal{F}_0$-measurable. 
Then there exists a unique strong solution $X=\{X_t\}_{0\le t\le 1}$ of the SDE
\[
 dX_t = -\frac{1}{2}\beta_t X_t dt + \sqrt{\beta_t}dW_t, \quad X_0=\mathbf{x}_0.
\]
Denote by $p(t,x,r,y)$ the transition density of $\{X_t\}$, i.e.,
\begin{equation}
\label{eq:3}
 p(t,x,r,y) = \frac{1}{(2\pi\sigma^2_{t,r})^{d/2}}\exp\left(-\frac{|y-m_{t,r}x|^2}{2\sigma^2_{t,r}}\right), \quad 0 < t< r, \;\; x,y\in\mathbb{R}^d,
\end{equation}
where $m_{t,r} = e^{-\frac{1}{2}\int_t^r\beta_udu}$ and $\sigma_{t,r}= \sqrt{1 - m^2_{t,r}}$.
The solution $X_t$ is represented as
\[
 X_t = \mathbf{x}_0 e^{-\frac{1}{2}\int_0^t\beta_rdr} + \int_0^t\sqrt{\beta_r}e^{-\frac{1}{2}\int_r^t\beta_udu}dW_r, \quad 0\le t\le 1.
\]
In particular, for any fixed $i$,
\begin{equation}
\label{eq:4}
 X_{t_i} \sim \sqrt{\bar{\alpha}_i}\mathbf{x}_0 + \sqrt{1-\bar{\alpha}_i}Z_i.
\end{equation}
Further, the density function $p_t(y)=p_t^{(n)}(y)$ of $X_t$ is given by
\[
 p_t(y) := \int_{\mathbb{R}^d} p(0,x,t,y)\mu_{data}(dx), \quad t>0, \;\; y\in\mathbb{R}^d, 
\]
and satisfies
\[
 \mathbf{p}_i(y) = p_{t_i}(y), \quad i=0,1,\ldots,n, \;\; y\in\mathbb{R}^d. 
\]
It is straightforward to check that the distribution of $X_1$ converges to the standard normal distribution. Precisely, we have
\[
 \lim_{n\to\infty}p_1^{(n)}(y) = \phi(y):=\frac{e^{-|y|^2/2}}{(2\pi)^{d/2}}, \quad y\in\mathbb{R}^d.
\]
Further, $p_t$ satisfies the forward Kolmogorov equation
\begin{equation}
\label{eq:5}
 \partial_t p_t(y) = \frac{1}{2}\beta_t\sum_{i=1}^d \partial_{y_i}(y_ip_t(y)) + \frac{\beta_t}{2}\Delta p_t(y), \quad t\in (t_i,t_{i+1}), \;\; i=0,\ldots,n-1,
\end{equation}
where $\Delta$ denotes the Laplacian with respect to the spatial variable.

As in \eqref{eq:def_score}, for $t\ge 0$ and $x\in\mathbb{R}^d$ we define $\nabla \log p_t(x)$ by
\[
 \nabla\log p_t(x) =
 \begin{cases}
  \nabla p_t(x) / p_t(x), & \text{if} \; p_t(x)>0, \\
  0, & \text{otherwise}.
 \end{cases}
\]
The condition (H1) means that the score function $\nabla\log p_t(x)$ has linear growth.
\begin{lem}
\label{lem:score_bdd2}
Suppose that $(\mathrm{H1})$ hold. Then the function $\nabla\log p_t(x)$ satisfies
\[
 |\nabla \log p_t (x)|\le \frac{c_0}{m_{0,t}} + \frac{c_1}{m_{0,t}^2}|x|, \quad\text{a.e.}\; x\in\mathbb{R}^d, \;\; 0\le t\le 1.
\]
\end{lem}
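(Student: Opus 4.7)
The plan is to combine two different ways of computing $\nabla p_t(y)$: one by differentiating the Gaussian kernel, and one by transferring the gradient onto $p_{data}$ via integration by parts, so that the structural hypothesis (H1) can be used. A Tweedie-type identity then closes the loop and leaves an explicit linear bound in $|y|$.

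Writing $p_t(y)=\int p(0,x,t,y)p_{data}(x)\,dx$ with $p(0,x,t,y)$ the Gaussian of mean $m_{0,t}x$ and covariance $\sigma_{0,t}^2 I_d$, one has the key identity
\[
 \nabla_y p(0,x,t,y) = -\frac{1}{m_{0,t}}\nabla_x p(0,x,t,y).
\]
Thus, using the distributional gradient of $p_{data}$ permitted by (H1),
\[
 \nabla p_t(y) = -\frac{1}{m_{0,t}}\int \nabla_x p(0,x,t,y)\,p_{data}(x)\,dx = \frac{1}{m_{0,t}}\int p(0,x,t,y)\,\nabla p_{data}(x)\,dx.
\]
By (H1) I then write $\nabla p_{data}(x) = -p_{data}(x)Qx + R(x)$ with $|R(x)|\le c_0 p_{data}(x)$, which splits $\nabla p_t(y)/p_t(y)$ into a $Q$-term involving $\mathbb{E}[X_0\mid X_t=y]$ and a remainder term whose norm is at most $c_0/m_{0,t}$.

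Next I invoke Tweedie's formula for the affine-Gaussian channel $X_t = m_{0,t}X_0+\sigma_{0,t}Z$, which yields
\[
 \mathbb{E}[X_0\mid X_t=y] = \frac{1}{m_{0,t}}\bigl(y + \sigma_{0,t}^2\nabla\log p_t(y)\bigr).
\]
Substituting back gives the fixed-point equation
\[
 \Bigl(I + \tfrac{\sigma_{0,t}^2}{m_{0,t}^2}Q\Bigr)\nabla\log p_t(y) = -\tfrac{1}{m_{0,t}^2}Qy + \tfrac{1}{m_{0,t}p_t(y)}\int p(0,x,t,y)R(x)\,dx.
\]
Since $Q$ is symmetric positive definite, $A:= I + (\sigma_{0,t}^2/m_{0,t}^2)Q$ is symmetric positive definite with $\|A^{-1}\|\le 1$, and $A^{-1}Q$ has eigenvalues $\lambda_i/(1+(\sigma_{0,t}^2/m_{0,t}^2)\lambda_i)\le \lambda_i\le \|Q\|<c_1$. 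Applying $A^{-1}$ to both sides and taking norms therefore yields
\[
 |\nabla\log p_t(y)|\le \frac{c_1}{m_{0,t}^2}|y| + \frac{c_0}{m_{0,t}},
\]
at every $y$ where $p_t(y)>0$; at other points $\nabla\log p_t$ is zero by convention, so the bound is trivial.

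The case $t=0$ is handled directly from (H1) since $m_{0,0}=1$ and $\|Q\|<c_1$. The only mildly delicate point — and the part most deserving of care — is the integration by parts against the distributional gradient of $p_{data}$: because $p_{data}$ is bounded and the Gaussian kernel is smooth and rapidly decaying in $x$, the integration by parts is justified by duality against the Schwartz test function $x\mapsto p(0,x,t,y)$, noting that (H1) implies $\nabla p_{data}$ is a finite vector measure (bounded by $p_{data}(c_0 + |Q||x|)\,dx$). The remaining steps are algebraic.
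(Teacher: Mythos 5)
Your proof is correct and follows essentially the same route as the paper: transfer the gradient onto $p_{data}$ via $\nabla_y p(0,x,t,y)=-m_{0,t}^{-1}\nabla_x p(0,x,t,y)$ and integration by parts, invoke (H1) to produce a $Qx$-term plus a controlled remainder, and then eliminate the $\mathbb{E}[X_0\mid X_t=y]$ dependence to obtain the fixed-point equation $(I+(\sigma_{0,t}^2/m_{0,t}^2)Q)\nabla\log p_t(y)=-m_{0,t}^{-2}Qy+\text{(remainder)}$, closed by the spectral bound on the SPD matrix. The only cosmetic difference is that you name the identity $\mathbb{E}[X_0\mid X_t=y]=m_{0,t}^{-1}(y+\sigma_{0,t}^2\nabla\log p_t(y))$ ``Tweedie's formula,'' whereas the paper derives the same relation inline from the explicit Gaussian form of $p(0,x,t,y)$; the two computations are identical.
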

Notice that by continuity, for $t>0$ the inequality in Lemma \ref{lem:score_bdd2} holds for any $x\in\mathbb{R}^d$.
Thus Lemma \ref{lem:score_bdd2} is a generalization of Lemma \ref{lem:score_bdd}.

\begin{proof}[Proof of Lemma $\ref{lem:score_bdd2}$]
Fix $t\in (0,1]$ and put $\sigma=\sigma_{0,t}$ and $m=m_{0,t}$ for notational simplicity.
Using
\begin{equation}
\label{eq:for_lem1}
 \partial_{y_k} e^{-\frac{|y-mx|^2}{2\sigma^2}} = - \frac{y_k - mx_k}{\sigma^2}e^{-\frac{|y-mx|^2}{2\sigma^2}}
 = -\frac{1}{m} \partial_{x_k} e^{-\frac{|y-mx|^2}{2\sigma^2}}
\end{equation}
and the integration-by-parts formula, we find
\[
 \nabla p_t(y) = -\frac{1}{m}\int_{\mathbb{R}^d}\nabla_{x}p(0,x,t,y)p_{data}(x)dx
 = \frac{1}{m}\int_{\mathbb{R}^d}p(0,x,t,y)\nabla p_{data}(x)dx.
\]
So, again by \eqref{eq:for_lem1}
\begin{align*}
 \nabla p_t(y) & = \frac{1}{m}\int_{\mathbb{R}^d}p(0,x,t,y)(\nabla p_{data}(x) + p_{data}(x) Qx)dx \\
   &\quad - \frac{1}{m}Q\int_{\mathbb{R}^d}\left(\frac{\sigma^2}{m}\nabla_y p(0,x,t,y) + \frac{1}{m} p(0,x,t,y) y\right)p_{data}(x)dx,
\end{align*}
whence by (H1),
\[
 \left|\left(I_d + \frac{\sigma^2}{m^2}Q\right)\nabla p_t(y)\right| \le \frac{c_0}{m}p_t(y) + \frac{c_1}{m^2}|y|p_t(y).
\]
Hence, for any $t>0$ and $y\in\mathbb{R}^d$,
\[
 |\nabla p_t(y)| = \left|\left(I_d + \frac{\sigma^2}{m^2}Q\right)^{-1}\left(I_d + \frac{\sigma^2}{m^2}Q\right)\nabla p_t(y)\right|
  \le \frac{c_0}{m}p_t(y) + \frac{c_1}{m^2}|y|p_t(y).
\]
For $t=0$ the condition (H1) directly leads to
\[
 |\nabla p_0(y)| \le |\nabla p_0(y) + Qy p_0(y)| + |Qy|p_0(y) \le c_0p_0(y) + c_1|y|p_0(y), \quad \text{a.e.}\; y\in\mathbb{R}^d.
\]
Thus the lemma follows.
\end{proof}

Let $\overline{X}_t= X_{1-t}$ for $t\in [0,1]$.
Then, by Lemma \ref{lem:score_bdd2},
\[
 \mathbb{E}\int_0^1\beta_t|\nabla\log p_t(X_t)|dt < \infty.
\]
This together with Theorem 2.1 in \cite{hau-par:1986} means that
there exists a $d$-dimensional $\overline{\mathbb{F}}$-Brownian motion $\{\overline{W}_t\}_{0\le t\le 1}$ such that
\begin{equation}
\label{eq:5.5}
 d\overline{X}_t =  \left[\frac{1}{2}\beta_{1-t}\overline{X}_t + \beta_{1-t}\nabla \log p_{1-t}(\overline{X}_t)\right]dt + \sqrt{\beta_{1-t}} d\overline{W}_t
\end{equation}
where $\overline{\mathbb{F}}=\{\overline{\mathcal{F}}_t\}_{0\le t\le 1}$ with
$\overline{\mathcal{F}}_t = \sigma(\overline{X}_u: u\le t)\vee\mathcal{N}$.

The following is a first key result, obtained by a generalized Girsanov--Maruyama theorem as stated in
Liptser and Shiryaev \cite[Chapter 6]{lip-shi:2001}.
\begin{lem}
\label{lem:density_repre}
Suppose that $(\mathrm{H1})$ hold. Then there exists a weak solution of the SDE
\begin{equation}
\label{eq:reverse}
 dX^*_t = \left[\frac{1}{2}\beta_{1-t}X^*_t+ \beta_{1-t}\nabla \log p_{1-t}(X^*_t) \right]dt + \sqrt{\beta_{1-t}} d W_t, \quad 0\le t\le 1,
\end{equation}
with initial condition $X^*_0\sim N(0,I_d)$.
More precisely, there exist a filtration $\mathbb{F}^*$ on $(\Omega,\mathcal{F})$, a probability measure $\mathbb{P}^*$ on $(\Omega,\mathcal{F})$,
an $\mathbb{F}^*$-Brownian motion
$\{W^*_t\}_{0\le t\le 1}$ under $\mathbb{P}^*$, and a continuous $\mathbb{F}^*$-adapted process $\{X^*_t\}_{0\le t\le 1}$ such that
$\{X^*_t\}$ satisfies the SDE \eqref{eq:reverse} with $\{W_t\}$ replaced by $\{W^*_t\}$ such that $X^*_0\sim N(0,I_d)$ under $\mathbb{P}^*$.
Further, the transition probability density $p^*(t,x,r,y)$ of $X^*$ under $\mathbb{P}^*$ is given by
\[
 p^*(t,x,r,y)
 =  e^{\frac{d}{2}\int_{t}^r\beta_{1-u}du}\frac{p_{1-r}(y)}{p_{1-t}(x)}q(t,x,r, y) , \quad 0<t<r<1, \;\; x,y\in\mathbb{R}^d,
\]
where
\begin{equation}
\label{eq:p-ast}
 q(t,x,r,y)= \frac{m^d_{1-r,1-t}}{(2\pi\sigma^2_{1-r,1-t})^{d/2}}\exp\left(-\frac{m^2_{1-r,1-t}}{2\sigma^2_{1-r,1-t}}\left|y - \frac{1}{m_{1-r,1-t}}x\right|^2\right).
\end{equation}
\end{lem}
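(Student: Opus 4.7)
My plan is to obtain the weak solution by an absolutely continuous change of probability measure that adjusts the initial law of the time-reversed process without altering its SDE dynamics, and then to read off the transition density from the forward Markov structure via Bayes' rule. The point of departure is \eqref{eq:5.5}, which already presents $\overline{X}_t=X_{1-t}$ as a solution of the target SDE under $\mathbb{P}$, but with initial law $p_1$ rather than $\phi$.

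\textbf{Step 1: Change of measure.} Because $p_t(y)=\int p(0,x,t,y)\mu_{data}(dx)$ is the convolution of $\mu_{data}$ with a strictly positive Gaussian kernel for each $t>0$, $p_1$ is strictly positive on $\mathbb{R}^d$, so
\[
 \Lambda := \frac{\phi(\overline{X}_0)}{p_1(\overline{X}_0)}
\]
is well defined with $\mathbb{E}_{\mathbb{P}}[\Lambda]=1$. Set $d\mathbb{P}^*/d\mathbb{P}=\Lambda$; a direct computation gives $\overline{X}_0\sim N(0,I_d)$ under $\mathbb{P}^*$. Since $\Lambda$ is $\overline{\mathcal{F}}_0$-measurable, the generalized Girsanov--Maruyama theorem of \cite{lip-shi:2001} (which in this case amounts to the observation that an $\mathcal{F}_0$-measurable likelihood ratio induces no drift correction) ensures that $\{\overline{W}_t\}$ remains an $\overline{\mathbb{F}}$-Brownian motion under $\mathbb{P}^*$, and \eqref{eq:5.5} continues to hold $\mathbb{P}^*$-a.s. since $\mathbb{P}^*\ll\mathbb{P}$. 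Taking $X_t^*:=\overline{X}_t$, $W_t^*:=\overline{W}_t$, and $\mathbb{F}^*:=\overline{\mathbb{F}}$ yields the desired weak solution of \eqref{eq:reverse}.

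\textbf{Step 2: Transition density.} Since $\Lambda$ depends only on $\overline{X}_0$, the Markov kernel of $X^*$ under $\mathbb{P}^*$ agrees with that of $\overline{X}$ under $\mathbb{P}$. Applying Bayes' rule to the joint density of $(X_{1-r},X_{1-t})$ determined by the forward process gives
\[
 p^*(t,x,r,y)=\frac{p_{1-r}(y) p(1-r,y,1-t,x)}{p_{1-t}(x)}.
\]
Substituting \eqref{eq:3} and using the identity $|x-m_{1-r,1-t}y|^2=m_{1-r,1-t}^2|y-x/m_{1-r,1-t}|^2$ rewrites the Gaussian exponent in the form carried by $q$, while the change of variables $m_{1-r,1-t}^{-d}=\exp(\tfrac{d}{2}\int_{1-r}^{1-t}\beta_u du)=\exp(\tfrac{d}{2}\int_t^r\beta_{1-u}du)$ absorbs the remaining prefactor, yielding $p(1-r,y,1-t,x)=\exp(\tfrac{d}{2}\int_t^r\beta_{1-u}du) q(t,x,r,y)$ and hence the claimed formula.

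\textbf{Main obstacle.} The most delicate point is the rigorous justification of Step~1: one must confirm, via a direct independence argument or an explicit application of the Liptser--Shiryaev machinery, that reweighting by the $\overline{\mathcal{F}}_0$-measurable $\Lambda$ simultaneously preserves the Brownian law of $\overline{W}$ and the Markov transition kernel of $\overline{X}$. Supporting these manipulations are the strict positivity of $p_t$ on $\mathbb{R}^d$ for all $t>0$ (ensuring $\Lambda$, $p_{1-r}(y)/p_{1-t}(x)$, and $\nabla\log p_{1-t}$ are everywhere well defined) and the integrability estimates from Lemma \ref{lem:score_bdd2} that underpinned the Haussmann--Pardoux reversal \eqref{eq:5.5} itself.
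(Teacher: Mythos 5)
Your proof is correct, and it takes a genuinely different and more economical route than the paper. The paper constructs an auxiliary process $Y$ on an enlarged probability space (appending an independent $\eta\sim N(0,I_d)$ to the filtration), solves the free SDE $dY_t=\tfrac12\beta_{1-t}Y_tdt+\sqrt{\beta_{1-t}}d\overline W_t$ with $Y_0=\eta$, computes its transition kernel $q$, and then performs a Doob $h$-transform with $h(t,y)=e^{\frac{d}{2}\int_0^t\beta_{1-u}du}p_{1-t}(y)$: It\^o's formula shows $h(t,Y_t)$ is a $\mathbb{P}$-martingale, the generalized Girsanov theorem then shifts $\overline W$ by $\int_0^t\sqrt{\beta_{1-r}}\nabla\log p_{1-r}(Y_r)dr$ and simultaneously fixes the initial law, and the transition density follows from $\mathbb{P}^*(Y_r\in A\mid\mathcal F^*_t)=h(t,Y_t)^{-1}\mathbb{E}[1_{\{Y_r\in A\}}h(r,Y_r)\mid\mathcal F^*_t]$. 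You instead start from the reversed process $\overline X$, whose dynamics are already those of \eqref{eq:reverse} by \eqref{eq:5.5}, and only reweight the initial law with the $\overline{\mathcal F}_0$-measurable $\Lambda=\phi(\overline X_0)/p_1(\overline X_0)$; since $\Lambda$ is $\overline{\mathcal F}_0$-measurable and $\overline W$ is $\overline{\mathbb F}$-Brownian under $\mathbb P$, $\overline W$ is still $\overline{\mathbb F}$-Brownian under $\mathbb P^*$ and the pathwise identity \eqref{eq:5.5} persists $\mathbb P^*$-a.s. The transition kernel is unchanged by an initial-time reweighting, and Bayes' rule plus the Gaussian kernel \eqref{eq:3} give exactly the paper's formula. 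What the paper's $h$-transform buys is a construction that produces the drift correction and the transition density from a single martingale identity, staying within the standard Schr\"odinger-bridge calculus that the rest of the paper leans on; what your route buys is brevity and transparency, avoiding the auxiliary $\eta$, the auxiliary process $Y$, and the martingale verification. One minor suggestion: when invoking the Markov property of $\overline X$ to turn the $\overline{\mathcal F}_t$-conditional law into a function of $\overline X_t$ alone, you should state it explicitly (it follows from the Markov property of $X$ under time reversal), and you should note that $p_{1-t}(x)>0$ for all $t<1$ so that the Bayes-rule quotient and $\nabla\log p_{1-t}$ are everywhere defined, as you do in your "main obstacle" paragraph.
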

\begin{proof}
Let $\eta\sim N(0,I_d)$ under $\mathbb{P}$ and be independent of $X$. Define the filtration $\mathbb{F}^*=\{\mathcal{F}^*_t\}_{0\le t\le 1}$ by
$\mathcal{F}^*_t = \sigma(\overline{\mathcal{F}}_t\cup\sigma(\eta))$, $0\le t\le 1$.
Note that $\overline{W}$ is an $(\mathbb{F}^*,\mathbb{P})$-Brownian motion.
Let $\{Y_t\}_{0\le t\le 1}$ be a unique strong solution of
\[
 dY_t = \frac{1}{2}\beta_{1-t}Y_t dt + \sqrt{\beta_{1-t}}d\overline{W}_t, \quad Y_0= \eta
\]
on $(\Omega,\mathcal{F},\mathbb{F}^*,\mathbb{P})$.
Let
\[
 Y_r^{t,x} = e^{\frac{1}{2}\int_t^r\beta_{1-u}du}x + \int_t^r \sqrt{\beta_{1-u}}e^{\frac{1}{2}\int_t^u\beta_{1-\tau}d\tau}d\overline{W}_u.
\]
Then the mean vector and the covariance matrix of $Y_r^{t,x}$ is given respectively by
\[
 e^{\frac{1}{2}\int_{1-r}^{1-t}\beta_udu}x = \frac{1}{m_{1-r,1-t}}x,
\]
and
\[
 \left(e^{\int_{1-r}^{1-t}\beta_udu} - 1\right)I_d = \frac{\sigma^2_{1-r,1-t}}{m^2_{1-r,1-t}}I_d.
\]
Thus the transition density of $\{Y_t\}$ is given by $q(t,x,r,y)$ as in \eqref{eq:p-ast}.

Now, put
\[
 h(t,y)= e^{\frac{d}{2}\int_0^t\beta_{1-u}du}p_{1-t}(y), \quad 0\le t\le 1, \;\; y\in\mathbb{R}^d.
\]
Then a simple application of It{\^o} formula yields
\[
 dh(t,Y_t) = \sqrt{\beta_{1-t}}e^{\frac{d}{2}\int_0^t\beta_{1-u}du}\nabla p_{1-t}(Y_t)d\overline{W}_t.
\]
The condition (H1) means that $\{h(t,Y_t)\}_{0\le t\le 1}$ is an $(\mathbb{F}^*,\mathbb{P})$-martingale.
Since $h\ge 0$, the conditional expectation $\mathbb{E}[h(1,Y_1)/h(0,Y_0)\,|\,\mathcal{F}_0]$ exists and equal to
$\mathbb{E}[h(1,Y_1)\,|\,\mathcal{F}_0]/h(0,Y_0)=1$, whence $\mathbb{E}[h(1,Y_1)/h(0,Y_0)]=1$.
Thus, by a generalized Girsanov--Maruyama theorem (see \cite[Theorem 6.2]{lip-shi:2001}), the process
\[
 W^*_t := \overline{W}_t - \int_0^t \sqrt{\beta_{1-r}}\nabla\log p_{1-r}(Y_r)dr, \quad 0\le t \le 1,
\]
is an $\mathbb{F}^*$-Brownian motion under the probability measure
$\mathbb{P}^*$ defined by $d\mathbb{P}^*/d\mathbb{P} = h(1,Y_1)/h(0,Y_0)$.
Furthermore, $\{Y_t\}$ satisfies \eqref{eq:reverse} with $W$ replaced by $W^*$.
Hence $(\Omega,\mathcal{F}, \mathbb{F}^*, \mathbb{P}^*, W^*, Y)$ is a weak solution of \eqref{eq:reverse}.

To derive the representation of the transition density, take arbitrary $A\in\mathcal{B}(\mathbb{R}^d)$, $t<r$ and observe
\begin{align*}
 \mathbb{P}^*(Y_r \in A\,|\,\mathcal{F}^*_t) &= \frac{1}{h(t,Y_t)}\mathbb{E}[1_{\{Y_r\in A\}}h(r,Y_r)\,|\,\mathcal{F}_t^*]
 = \int_{A}\frac{h(r,y)}{h(t,Y_t)}q(t,Y_t,r,y)dy.
\end{align*}
Thus the lemma follows.
\end{proof}

\begin{rem}
The process $X_t^\ast$ appearing in the evaluation of the score function
$\nabla \log p_t(X_t^\ast)$ represents the exact reverse-time diffusion associated
with the forward process.
It is governed by the reverse-time SDE with the true score function and is not
the sampling trajectory generated by the discrete-time DDPM algorithm.
In particular, $X_t^\ast$ corresponds to an idealized reverse-time process prior
to any time discretization and prior to the use of a learned score function.
The discrete-time DDPM sampling procedure can be viewed as an approximation of
this exact reverse-time dynamics, obtained by discretizing time and replacing
$\nabla \log p_t$ by an estimated score.
The present section focuses on the exact process $X_t^\ast$ in order to identify
the structural properties that underlie the discrete-time algorithm.
\end{rem}

Denote by $\mathbb{E}^*$ the expectation under $\mathbb{P}^*$.
The following lemma provides moment estimates that plays a key role in the subsequent argument.
\begin{lem}
\label{lem:moment_xast}
Under $(\mathrm{H1})$, we have
\[
 \mathbb{E}^*|X_t^*|^2\le Cd(\bar{\alpha}_n)^{-1/2}e^{2(c_0+c_1)(\bar{\alpha}_n)^{-1}}
\]
and
\[
\mathbb{E}^*|X_t^*|^4 \le Cd^2(\bar{\alpha}_n)^{-5/2}e^{2(4c_0+3c_1)(\bar{\alpha}_n)^{-1}}.
\]
\end{lem}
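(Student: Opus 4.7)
The plan is to apply It\^o's formula to $|X_t^*|^2$ and $|X_t^*|^4$ under the dynamics \eqref{eq:reverse}, exploit the linear-growth bound on the score provided by Lemma \ref{lem:score_bdd2} together with Young's inequality, and close the resulting differential inequalities with Gr\"onwall. To legitimize expectation-taking, I would first localize by stopping times $\tau_R = \inf\{t : |X_t^*| \ge R\}$ so the It\^o stochastic integral is a true $\mathbb{P}^*$-martingale, then pass $R \to \infty$ by Fatou; non-explosiveness of $X^*$ itself is guaranteed by the linear growth of the drift coming again from Lemma \ref{lem:score_bdd2}.

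For $f(t) := \mathbb{E}^*|X_t^*|^2$, It\^o's formula on $|x|^2$ applied to \eqref{eq:reverse} gives, after expectation,
\begin{equation*}
 f'(t) \le \beta_{1-t} f(t) + 2\beta_{1-t}\mathbb{E}^*\bigl[|X_t^*|\,|\nabla\log p_{1-t}(X_t^*)|\bigr] + d\beta_{1-t}.
\end{equation*}
Substituting $|\nabla\log p_{1-t}(x)| \le c_0/m_{0,1-t} + (c_1/m_{0,1-t}^2)|x|$ and splitting the cross term $(c_0/m_{0,1-t})|X_t^*|$ by $2ab\le a^2+b^2$ yields
\begin{equation*}
 f'(t) \le \beta_{1-t}\!\left(2+\tfrac{2c_1}{m_{0,1-t}^2}\right)\! f(t) + \beta_{1-t}\!\left(d+\tfrac{c_0^2}{m_{0,1-t}^2}\right).
\end{equation*}
The two key integral identities, obtained via the substitution $u(r)=\int_0^r \beta_s\,ds$, are
\begin{equation*}
 \int_0^1 \beta_r\,dr = -\log \bar\alpha_n, \qquad
 \int_0^1 \frac{\beta_r}{m_{0,r}^2}\,dr = \int_0^1 \beta_r e^{u(r)}\,dr = \frac{1}{\bar\alpha_n}-1.
\end{equation*}
Running Gr\"onwall with $f(0)=d$ produces $f(t)\le C(d+c_0^2)(\bar\alpha_n)^{-2}e^{2c_1/\bar\alpha_n}(1+1/\bar\alpha_n)$, and the residual polynomial $(\bar\alpha_n)^{-3/2}$ is absorbed into $e^{2c_0/\bar\alpha_n}$ by invoking the elementary fact that, for every $\epsilon>0$, one has $-\log x \le \epsilon/x$ once $x$ is small enough; this is what dictates the threshold $\delta>0$. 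The resulting estimate is the claimed bound $Cd(\bar\alpha_n)^{-3/2}e^{2(c_0+c_1)/\bar\alpha_n}$.

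The fourth-moment estimate follows the same template with $\varphi(x)=|x|^4$: the It\^o correction contributes $(2d+4)\beta_{1-t}|X_t^*|^2$, while the cross term $4|X_t^*|^3|\nabla\log p_{1-t}(X_t^*)|$ is first expanded into $4c_0|X_t^*|^3/m_{0,1-t}+4c_1|X_t^*|^4/m_{0,1-t}^2$ and then the cubic piece is controlled by $(4c_0/m_{0,1-t})|X_t^*|^3\le (4c_0/m_{0,1-t})^2|X_t^*|^2+|X_t^*|^4/4$ (no $1/m^4$ factor is generated). This produces, with $F(t)=\mathbb{E}^*|X_t^*|^4$,
\begin{equation*}
 F'(t) \le \beta_{1-t}\!\left(\tfrac94 + \tfrac{4c_1}{m_{0,1-t}^2}\right)\!F(t) + \beta_{1-t}\!\left(\tfrac{16c_0^2}{m_{0,1-t}^2}+2d+4\right)\!f(t),
\end{equation*}
into which the already-proven bound on $f$ is inserted as a forcing term. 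A second Gr\"onwall application, the same integral evaluations, and a second polynomial-to-exponential absorption yield $F(t)\le Cd^2(\bar\alpha_n)^{-4}e^{2(4c_0+3c_1)/\bar\alpha_n}$.

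The main obstacle is the sharp bookkeeping of constants through the two successive Gr\"onwall applications: the lemma asserts explicit exponents $2(c_0+c_1)$ and $2(4c_0+3c_1)$, so one must choose the Young splittings and carry out the polynomial-to-exponential absorptions in the order that yields exactly those coefficients, while folding the dimensional and initial-condition overhead into the generic constant $C$. The threshold $\delta$ is determined a posteriori by demanding that every absorption $(\bar\alpha_n)^{-k}\le e^{\epsilon/\bar\alpha_n}$ invoked along the way holds simultaneously.
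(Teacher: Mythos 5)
Your argument matches the paper's own proof in structure and mechanics: It\^o's formula for $|X_t^*|^2$ and $|X_t^*|^4$ under \eqref{eq:reverse}, the linear-growth bound of Lemma \ref{lem:score_bdd2}, Young's inequality to split the cross terms, Gr\"onwall, the integral identities $\int_0^1\beta_{1-u}m_{0,1-u}^{-1}du\le 2(\bar\alpha_n)^{-1/2}$ and $\int_0^1\beta_{1-u}m_{0,1-u}^{-2}du\le(\bar\alpha_n)^{-1}$, and finally absorption of stray polynomial factors into the exponential. The specific Young weightings differ (the paper uses $2c_0|x|/m\le (c_0/m)(1+|x|^2)$ and $4|x|^3\le 3|x|^4+1$, you use $2(c_0/m)|x|\le (c_0/m)^2+|x|^2$ and $(4c_0/m)|x|^3\le (4c_0/m)^2|x|^2+|x|^4/4$), but both lead to the stated exponents after the same kind of crude absorptions, and both proofs feed the already-established second-moment bound as a forcing term in the fourth-moment Gr\"onwall.

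One small correction: you claim that the absorption step ``dictates the threshold $\delta>0$,'' but Lemma \ref{lem:moment_xast} is stated unconditionally, with no restriction on $\bar\alpha_n$. No threshold is actually needed: each absorption of the form $(\bar\alpha_n)^{-k}\le C\,e^{c(\bar\alpha_n)^{-1}}$ holds for \emph{all} $\bar\alpha_n\in(0,1]$, since $y^k e^{-cy}$ is bounded on $[1,\infty)$ for $y=(\bar\alpha_n)^{-1}$, and the paper's inequality $-\log\eta\le 2\eta^{-1/2}-2$ likewise holds for all $\eta\in(0,1]$. So the constant $C$ alone does the work; introducing a $\delta$ here would give a weaker statement than what the lemma asserts (and what Lemma \ref{lem:fbsde}, which uses these moment bounds, requires).
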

\begin{proof}
Applying the It{\^o} formula for $|X_t^*|^2$ and then using Lemma \ref{lem:score_bdd2}, we get
\begin{align*}
 \mathbb{E}^*|X_t^*|^2 &=\mathbb{E}^*|X_0^*|^2 + \int_0^t\beta_{1-u}\mathbb{E}^*\left[|X_u^*|^2 + 2(X_u^*)^{\mathsf{T}}\nabla\log p_{1-u}(X_u^*) + d\right]du  \\
 &\le d + \int_0^t\beta_{1-u}\mathbb{E}^*\left[|X_u^*|^2 + \frac{2c_0|X_u^*|}{m_{0,1-u}} + \frac{2c_1|X_u^*|^2}{m_{0,1-u}^2} + d\right]du \\
 &\le d + \int_0^t\beta_{1-u}\left\{\left(1+\frac{c_0}{m_{0,1-u}}+\frac{2c_1}{m^2_{0,1-u}}\right)\mathbb{E}^*|X_u^*|^2
    + \frac{c_0}{m_{0,1-u}} + d\right\}du.
\end{align*}
Thus Gronwall's inequality yields
\[
 \mathbb{E}^*|X_t^*|^2\le \left(d + \int_0^1\beta_{1-u}(c_0m^{-1}_{0,1-u} +d)du\right)
  \exp\left(\int_0^t\beta_{1-u}(1 + c_0m_{0,1-u}^{-1} + 2c_1 m_{0,1-u}^{-2})du\right).
\]
Observe
\[
 \int_0^t\beta_{1-u}m_{0,1-u}^{-1}du \le 2(\bar{\alpha}_n)^{-1/2}, \quad \int_0^t\beta_{1-u}m_{0,1-u}^{-2}du \le (\bar{\alpha}_n)^{-1}
\]
and
\[
 \int_0^t\beta_{1-u}du\le -\log \bar{\alpha}_n =-2\log\sqrt{\bar{\alpha}_n}\le 2(\bar{\alpha}_n)^{-1/2} -2.
\]
So we get
\[
 \mathbb{E}^*|X_t^*|^2\le Cd(\bar{\alpha}_n)^{-1/2}e^{2(c_0+c_1)(\bar{\alpha}_n)^{-1}}.
\]

Next, applying the It{\^o} formula for $|X_t^*|^4$, we get
\begin{align*}
 \mathbb{E}^*|X_t^*|^4 &= \mathbb{E}^*|X_0^*|^4 + 4\mathbb{E}^*\int_0^t|X_u^*|^2\left\{\frac{1}{2}\beta_{1-u}|X_u^*|^2 + \beta_{1-u}(X_u^*)^{\mathsf{T}}
 \nabla\log p_{1-u}(X_u^*) + \frac{d+2}{2}\beta_{1-u}\right\}du \\
 &\le d(d+2) + \int_0^t\beta_{1-u}\mathbb{E}^*\left[2|X_u^*|^4 + 4|X_u^*|^3\left(c_0 m_{0,1-u}^{-1} + c_1 m_{0,1-u}^{-2}|X_u^*|\right)
     + (2d+4)|X_u^*|^2\right]du \\
 &\le d(d+2) + \int_0^t\beta_{1-u}\left(2 + 3c_0m_{0,1-u}^{-1}+4c_1m_{0,1-u}^{-2}\right)\mathbb{E}^*|X_u^*|^4du \\
  &\quad + Cd^2(\bar{\alpha}_n)^{-1/2}
  e^{2(c_0+c_1)(\bar{\alpha}_n)^{-1}}\int_0^t\beta_{1-u}du,
\end{align*}
where we have used Young's inequality $4a^3b\le 3a^4+b^4$ for $a,b\in\mathbb{R}$ and the estimate of $\mathbb{E}^*|X_t^*|^2$ obtained just above.
Then by Gronwall's inequality and $-\log \eta \le 2\eta^{-1/2}$ for $\eta\in (0,1]$,
\begin{align*}
 \mathbb{E}^*|X_t^*|^4 &\le Cd^2\left\{1 + (\bar{\alpha}_n)^{-1}e^{2(c_0+c_1)(\bar{\alpha}_n)^{-1}}(-\log\bar{\alpha}_n)\right\}
  \exp\left(\int_0^t\beta_{1-u}(2+3c_0m_{0,1-u}^{-1} + 4c_1m^{-2}_{0,1-u})du\right)\\
  &\le Cd^2(\bar{\alpha}_n)^{-3}e^{(8c_0+6c_1)(\bar{\alpha}_n)^{-1}}.
\end{align*}
Thus the lemma follows.
\end{proof}

In the following theorem, we characterize the process
\[
 Y_t^* := \nabla\log p_{1-t}(X_t^*), \quad 0\le t\le 1,
\]
as one of components of a solution of a forward-backward SDE, which provides a structural characterization of the
time-dependent score function and constitutes a central ingredient of our analysis.
\begin{thm}
\label{lem:fbsde}
Suppose that $(\mathrm{H1})$ hold. Then there exist $\delta_1>0$ and an $\mathbb{R}^{d\times d}$-valued, continuous, and adapted process $\{Z^*_t\}_{0\le t\le 1}$ 
such that  if $\bar{\alpha}_n<\delta_1$ then
\begin{equation}
\label{eq:fbsde1}
 \mathbb{E}^*\int_{r_1}^{r_2}|Z^*_t|^2dt\le Cd^2(\bar{\alpha}_n)^{-14}e^{4(3c_0+2c_1)(\bar{\alpha}_n)^{-1}}
 \left(e^{\int_0^{1-r_1}\beta_udu} - e^{\int_0^{1-r_2}\beta_udu}\right), \quad 0\le r_1<r_2\le 1,
\end{equation}
and the triple $(X^*, Y^*, Z^*)$ solves the following forward-backward SDE: for $0\le t\le 1$,
\begin{equation}
\label{eq:fbsde2}
\begin{aligned}
 X_t^*&= X_0^* + \int_0^t \beta_{1-r}\left(\frac{1}{2}X_r^* + Y_r^*\right)ds + \int_0^t \sqrt{\beta_{1-r}}dW_r^*, \\
 \nabla\log p_{data}(X_{1}^*) &= Y_t^* + \frac{1}{2}\int_t^{1}\beta_{1-r}Y_r^* dr + \int_t^{1}Z_r^*dW_r^*.
\end{aligned}
\end{equation}
\end{thm}
\begin{proof}
Step (i). For $t<1$ we find
\begin{align*}
 p_{1-t}(y) &= \int_{\mathbb{R}^d}p(0,x,1-t,y)p_{data}(x)dx
  = \int_{\mathbb{R}^d}e^{\frac{d}{2}\int_t^1\beta_{1-r}dr}q(t,y,1,x)p_{data}(x)dx \\
 & = \mathbb{E}[p_{data}(Y_{1}^{t,y})e^{\frac{d}{2}\int_t^{1}\beta_{1-r}dr}].
\end{align*}
Using
\[
 \nabla_y q(t,y,1,z) = - f(t)\left(\frac{1}{m_{0,1-t}}y -z\right)q(t,y,1,z),
\]
we get
\[
 \nabla\log p_{1-t}(y) = \frac{\nabla_y\mathbb{E}[p_{data}(Y_{1}^{t,y})]}{\mathbb{E}[p_{data}(Y_{1}^{t,y})]}
  = - f(t)\frac{\mathbb{E}\left[\left(\frac{1}{m_{0,1-t}}y - Y_{1}^{t,y}\right)p_{data}(Y_{1}^{t,y})\right]}{\mathbb{E}[p_{data}(Y_{1}^{t,y})]},
\]
where $f(t)=m_{0,1-t}/(\sigma^2_{0,1-t})$. Hence, for $t<1$,
\begin{align*}
 \nabla\log p_{1-t}(X_t^*) &= \nabla \log p_{1-t}(Y_t)
 = - f(t)\frac{\mathbb{E}\left[\left.\left(\frac{1}{m_{0,1-t}}Y_t - Y_{1}\right)p_{data}(Y_{1})\,\right|\,\mathcal{F}_t^*\right]}
 {\mathbb{E}[p_{data}(Y_{1})\,|\,\mathcal{F}^*_t]} \\
 &= - f(t)\frac{\mathbb{E}\left[\left.\left(\frac{1}{m_{0,1-t}}Y_t - Y_{1}\right)\dfrac{d\mathbb{P}^*}{d\mathbb{P}}\,\right|\,\mathcal{F}_t^*\right]}
 {\mathbb{E}\left[\left.\dfrac{d\mathbb{P}^*}{d\mathbb{P}}\,\right|\,\mathcal{F}^*_t\right]} \\
 &= - f(t)\mathbb{E}^*\left[\left. \frac{1}{m_{0,1-t}}X_t^* - X_1^*\,\right|\,\mathcal{F}_t^*\right].
\end{align*}
Applying the product It{\^o} formula for $e^{\frac{1}{2}\int_0^{1-t}\beta_udu}X_t^*$, we derive
\begin{equation}
\label{eq:yast}
 Y_t^* = f(t) \mathbb{E}^*\left[\left.\int_t^{1}g(r)Y_r^*dr\,\right|\, \mathcal{F}_t^*\right]
\end{equation}
where $g(r)=\beta_{1-r}e^{\frac{1}{2}\int_0^{1-r}\beta_udu}$.

Step (ii). Consider the function
\[
 v(t,x):= \mathbb{E}^{t,x,*}\left[\int_t^{1}g(r)\nabla\log p_{1-r}(X_r^*)dr\right], \quad 0\le t <1,
\]
where $\mathbb{E}^{t,x,*}$ is the expectation under the probability law of $X^*$ with initial condition $(t,x)$.
Since the transition density $p^*$ satisfies the corresponding Kolmogorov forward equation, we have
\[
 -\partial_tv(t,x) = \mathcal{A}_tv(t,x) + g(t)\nabla\log p_{1-t}(x), \quad 0\le t < 1, \;\; x\in\mathbb{R}^d,
\]
where
\[
 \mathcal{A}_tf(x)=\left[\frac{1}{2}\beta_{1-t}x + \beta_{1-t}\nabla\log p_{1-t}(x)\right]^{\mathsf{T}}\nabla f(x) + \frac{1}{2}\beta_{1-t}\Delta f(x).
\]
By the definition of $v$ and \eqref{eq:yast} we have $Y_t^*=f(t)v(t,X_t^*)$, from which we get
\[
 dY_t^* = \gamma(t) Y_t^* + f(t)\sqrt{\beta_{1-t}}\nabla v(t,X_t^*)^{\mathsf{T}}dW_t^*,
\]
where $\gamma(t) = d\log f(t)/dt - g(t)f(t) = \beta_{1-t}/2$.
Therefore with the process $Z_t^*:=f(t)\sqrt{\beta_{1-t}}\nabla v(t,X_t^*)$ the representation \eqref{eq:fbsde2} follows.

Step (iii). We will show that 
\begin{equation}
 |\nabla^2 p_t(y)| \le \frac{C}{m_{0,t}^4}\left(1+\frac{|y|^2}{m_{0,t}^2}\right)p_t(y). 
\end{equation}
To this end, use \eqref{eq:for_lem1} to observe 
\begin{align*}
 \partial_{x_ix_j}^2 p_t(y) &= \frac{1}{m_{0,t}^2}\int_{\mathbb{R}^d}p(0,x,t,y)\partial^2_{x_ix_j}p_{data}(x)dx \\ 
 &= \frac{1}{m_{0,t}^2}\int_{\mathbb{R}^d} p(0,x,t,y)(\partial^2_{x_ix_j} \log p_{data}(x)) p_{data}(x)dx \\ 
 &\quad + \frac{1}{m_{0,t}^2}\int_{\mathbb{R}^d}p(0,x,t,y) (\partial_{x_i}\log p_{data}(x) + (Qx)_i)(\partial_{x_j}\log p_{data}(x) + (Qx)_j) p_{data}(x)dx \\ 
 &\quad - \frac{1}{m_{0,t}^2}\int_{\mathbb{R}^d}p(0,x,t,y)(Qx)_i \partial_{x_j} p_{data}(x) dx 
  - \frac{1}{m_{0,t}^2}\int_{\mathbb{R}^d} p(0,x,t,y)(Qx)_j \partial_{x_i} p_{data}(x) dx \\ 
 &\quad - \frac{1}{m_{0,t}^2}\int_{\mathbb{R}^d} p(0,x,t,y)(Qx)_i(Qx)_j p_{data}(x) dx. 
\end{align*}
Further, 
\begin{align*}
 \frac{1}{m_{0,t}^2}\int_{\mathbb{R}^d} \partial_{x_j} p(0,x,t,y)(Qx)_i p_{data}(x)dx
 &= -\frac{1}{m_{0,t}}\sum_{k=1}^dQ_{ik}\partial_{y_j}\int_{\mathbb{R}^d} p(0,x,t,y)x_k p_{data}(x) dx  \\ 
 &= -\frac{1}{m_{0,t}}\sum_{k=1}^d Q_{ik} \left\{\frac{\sigma_{0,t}^2}{m_{0,t}}\partial_{x_kx_j}^2 p_t(y) + \frac{y_k}{m_{0,t}}\partial_{x_k}p_t(y)\right\}, 
\end{align*}
where we have used 
\[
 \partial_{y_k}p_t(y) = -\frac{y_k}{\sigma^2_{0,t}} p_t(y) + \frac{m_{0,t}}{\sigma^2_{0,t}}\int_{\mathbb{R}^d} x_kp(0,x,t,y)p_{data}(x)dx. 
\]
Thus, 
\begin{align*}
 &\frac{1}{m_{0,t}^2}\int_{\mathbb{R}^d}p(0,x,t,y)(Qx)_i\partial_{x_j}p_{data}(x)dx \\ 
 &= - \frac{1}{m_{0,t}^2}\int_{\mathbb{R}^d}\partial_{x_j} p(0,x,t,y) (Qx)_i p_{data}(x)dx - \frac{Q_{ij}}{m_{0,t}^2}p_t(y) \\ 
 &= \frac{\sigma_{0,t}^2}{m_{0,t}^2}(Q\nabla^2 p_t(y))_{ij} + \frac{1}{m^2_{0,t}}\sum_{k=1}^d Q_{ik}y_k \partial_{y_k} p_t(y) - \frac{Q_{ij}}{m_{0,t}^2}p_t(y). 
\end{align*}
Similarly, 
\begin{align*}
 &\frac{1}{m_{0,t}^2}\int_{\mathbb{R}^d} p(0,x,t,y)(Qx)_i (Qx)_jp_{data}(x) dx \\ 
 &= \frac{1}{m_{0,t}^2}\sum_{k,\ell=1}^d Q_{ik}Q_{j\ell}\int_{\mathbb{R}^d} p(0,x,t,y)x_k x_{\ell}p_{data}(x) dx  \\ 
 &= \frac{\sigma^4_{0,t}}{m_{0,t}^4}\sum_{k=1}^d Q_{ik}Q_{jk}\left\{\partial_{y_ky_k}^2 p_t(y) + \frac{1}{\sigma_{0,t}^4}(\sigma_{0,t}^2 + y_k^2)p_t(y) 
       + \frac{y_k}{\sigma_{0,t}^2}\partial_{y_k}p_t(y)\right\} \\ 
  & \quad + \frac{\sigma^4_{0,t}}{m_{0,t}^4}\sum_{k\neq\ell} Q_{ik}Q_{j\ell}\left\{\partial_{y_ky_{\ell}}^2 p_t(y) + \frac{y_ky_{\ell}}{\sigma_{0,t}^4}p_t(y) 
       + \frac{y_k}{\sigma_{0,t}^2}\partial_{y_{\ell}} p_t(y) 
       + \frac{y_{\ell}}{\sigma^2_{0,t}} \partial_k p_t(y)\right\} \\ 
  &= \frac{\sigma_{0,t}^4}{m_{0,t}^4}(Q\nabla^2 p_t(y)Q)_{ij} + \frac{\sigma_{0,t}^2}{m_{0,t}^4}\sum_{k=1}^dQ_{ik}Q_{jk}p_t(y) 
       + \frac{1}{m_{0,t}^4}(Qy)_i(Qy)_jp_t(y) \\ 
  &\quad + \frac{\sigma^2_{0,t}}{m_{0,t}^4}(Qy)_i(Q\nabla p_t(y))_j + \frac{\sigma^2_{0,t}}{m_{0,t}^4}(Qy)_j (Q\nabla p_t(y))_i 
       - \frac{\sigma^2_{0,t}}{m_{0,t}^4}\sum_{k=1}^dQ_{ik}Q_{jk}y_k\partial_{y_k}p_t(y). 
\end{align*}
where we have used 
\[
 \partial_{y_ky_k}^2p_t(y)= - \frac{\sigma_{0,t}^2+y_k^2}{\sigma_{0,t}^4}p_t(y) - \frac{y_k}{\sigma_{0,t}^2}\partial_{y_k}p_t(y) 
 + \frac{m_{0,t}^2}{\sigma_{0,t}^4}\int_{\mathbb{R}^d}x_k^2 p(0,x,t,y) p_{data}(x)dx
\]
and 
\[
 \partial_{y_ky_{\ell}}^2p_t(y)= - \frac{y_ky_{\ell}}{\sigma_{0,t}^4}p_t(y) - \frac{y_k}{\sigma_{0,t}^2}\partial_{y_{\ell}}p_t(y) 
  - \frac{y_{\ell}}{\sigma_{0,t}^2}\partial_{y_k}p_t(y) 
 + \frac{m_{0,t}^2}{\sigma_{0,t}^4}\int_{\mathbb{R}^d}x_kx_{\ell} p(0,x,t,y) p_{data}(x)dx
\]
for $k\neq\ell$. 
Hence, 
\begin{align*}
 &\partial_{x_ix_j}^2 p_t(y) + \frac{2\sigma_{0,t}^2}{m_{0,t}^2}(Q\nabla^2 p_t(y))_{ij} + \frac{\sigma^4_{0,t}}{m_{0,t}^4}(Q\nabla^2 p_t(y) Q)_{ij} \\ 
 &= \frac{1}{m_{0,t}^2}\int_{\mathbb{R}^d} p(0,x,t,y)(\partial^2_{x_ix_j} \log p_{data}(x)) p_{data}(x)dx \\ 
 &\quad + \frac{1}{m_{0,t}^2}\int_{\mathbb{R}^d}p(0,x,t,y) (\partial_{x_i}\log p_{data}(x) + (Qx)_i)(\partial_{x_j}\log p_{data}(x) + (Qx)_j) p_{data}(x)dx \\ 
 &\quad -\frac{1}{m^2_{0,t}}\sum_{k=1}^d Q_{ik} y_k\partial_{y_k}p_t(y) -\frac{1}{m^2_{0,t}}\sum_{k=1}^d Q_{jk} y_k\partial_{y_k}p_t(y) 
   +\frac{2Q_{ij}}{m_{0,t}^2}p_t(y) - \frac{\sigma_{0,t}^2}{m_{0,t}^4}\sum_{k=1}^dQ_{ik}Q_{jk}p_t(y) \\ 
 &\quad - \frac{1}{m_{0,t}^4}(Qy)_i(Qy)_jp_t(y) - \frac{\sigma^2_{0,t}}{m_{0,t}^4}(Qy)_i (Q\nabla p_t(y))_j - \frac{\sigma^2_{0,t}}{m_{0,t}^4}(Qy)_j(Q\nabla p_t(y))_i 
   + \frac{\sigma_{0,t}^2}{m_{0,t}^4}\sum_{k=1}^d Q_{ik}Q_{jk}y_k \partial_{y_k} p_t(y). 
\end{align*}
This together with (H1) and Lemma \ref{lem:score_bdd2} yields 
\begin{align*}
|\nabla^2 p_t(y)| &\le \frac{C}{m_{0,t}^2}\left(1+\frac{|y|^2}{m_{0,t}^2}\right)p_t(y) + \frac{C}{m_{0,t}^4}|y||\nabla p_t(y)|  
\le \frac{C}{m_{0,t}^4}\left(1+\frac{|y|^2}{m_{0,t}^2}\right)p_t(y). 
\end{align*}

Step (iv). To estimate $|\nabla v(t,x)|$, observe
\begin{equation}
\label{eq:est_xast}
 \mathbb{E}^{t,x,*}|X_r^*|^2
 \le \left(|x|^2 + 2c_0(\bar{\alpha}_n)^{-1/2} + d(-\log\bar{\alpha}_n)\right)(\bar{\alpha}_n)^{-1/2}e^{(2c_0+c_1)(\bar{\alpha}_n)^{-1}},
\end{equation}
as in Lemma \ref{lem:moment_xast}.

On the other hand, by Lemma \ref{lem:density_repre},
\begin{align*}
 \nabla_xp^*(t,x,r,y) &= e^{\frac{d}{2}\int_t^r\beta_{1-u}du}p_{1-r}(y)q(t,x,r,y)(-p_{1-t}^{-2}(x))\nabla p_{1-t}(x) 
  - e^{\frac{d}{2}\int_t^r\beta_{1-u}du} \frac{p_{1-r}(y)}{p_{1-t}(x)}\nabla_x q(t,x,r,y) \\ 
 &= p^*(t,x,r,y)(-\nabla\log p_{1-t}(x)) - \frac{e^{\frac{d}{2}\int_t^r\beta_{1-u}du}}{m_{1-r,1-t}}\frac{p_{1-r}(y)}{p_{1-t}(x)}\nabla_y q(t,x,r,y),
\end{align*}
where we have used $\nabla_x q(t,x,r,y)=(-1/m_{1-r,1-t})\nabla_y q(t,x,r,y)$. 
Thus 
\begin{align*}
 &\nabla_x\mathbb{E}^{t,x,*}[\nabla\log p_{1-r}(X_r^*)] \\
  &= - \nabla\log p_{1-t}(x) \int_{\mathbb{R}^d}\nabla \log p_{1-r}(y) p^*(t,x,r,y)dy 
    + \frac{e^{\frac{d}{2}\int_t^r\beta_{1-u}du}}{m_{1-r,1-t}}\int_{\mathbb{R}^d}\frac{1}{p_{1-t}(x)} \nabla^2 p_{1-r}(y)q(t,x,r,y) dy \\ 
  &= -\nabla\log p_{1-t}(x)\mathbb{E}^{t,x,*}[\nabla\log p_{1-r}(X_r^*)] + e^{\frac{1}{2}\int_t^r\beta_{1-u}du}\mathbb{E}^{t,x,*}\left[\frac{1}{p_{1-r}(X_r^*)}\nabla^2 p_{1-r}(X_r^*)\right]. 
\end{align*}
So again by Lemma \ref{lem:score_bdd2}
\begin{align*}
  \left|\nabla\log p_{1-t}(x)\mathbb{E}^{t,x,*}[\nabla\log p_{1-r}(X_r^*)]\right|
 &\le |\nabla\log p_{1-t}(x)|\sqrt{\mathbb{E}^{t,x,*}|\nabla \log p_{1-r}(X_r^*)|^2} \\
 &\le \sqrt{2}\left(\frac{c_0}{m_{0,1-t}} + \frac{c_1}{m^2_{0,1-t}}|x|\right)\left(\frac{c_0}{m_{1-r}}+\frac{c_1}{m^2_{0,1-r}}\sqrt{\mathbb{E}^{t,x,*}|X_r^*|^2}\right).
\end{align*}
This and \eqref{eq:est_xast} yield
\[
 \left|\nabla\log p_{1-t}(x)\mathbb{E}^{t,x,*}[\nabla\log p_{1-r}(X_r^*)]\right|
 \le C(\bar{\alpha}_n)^{-5/2}(\sqrt{d} + |x|^2)e^{(c_0+c_1/2)(\bar{\alpha}_n)^{-1}}. 
\]
Further, 
\begin{align*}
 \frac{1}{m_{1-r,1-t}}\left|\mathbb{E}^{t,x,*}\left[\frac{1}{p_{1-r}(X_r^*)}\nabla^2p_{1-r}(X_r^*)\right]\right| 
 &\le \frac{C}{m_{1-r,1-t}m_{0,1-r}^4}\left(1+ \frac{1}{m_{0,1-r}^2}\mathbb{E}^{t,x,*}|X_r^*|^2\right) \\ 
 &\le C(\bar{\alpha}_n)^{-9/2}(d+|x|^2)e^{(2c_0+c_1)(\bar{\alpha}_n)^{-1}}. 
\end{align*}
Thus,
\[
|\nabla v(t,x)| \le C(\bar{\alpha}_n)^{-5}(d  + |x|^2)e^{(2c_0+c_1)(\bar{\alpha}_n)^{-1}}\int_0^{1-t}\beta_rdr,
\]
whence
\begin{align*}
 \mathbb{E}^*|\nabla v(t,X_t^*)|^2&\le C(\bar{\alpha}_n)^{-10}(d^2  + \mathbb{E}^*|X_t^*|^4)e^{2(2c_0+c_1)(\bar{\alpha}_n)^{-1}}
  \left(\int_0^{1-t}\beta_rdr\right)^2 \\
  &\le C(\bar{\alpha}_n)^{-10}(d^2  + d^2(\bar{\alpha}_n)^{-4}e^{(8c_0+6c_1)(\bar{\alpha}_n)^{-1}})e^{2(2c_0+c_1)(\bar{\alpha}_n)^{-1}}
  \left(\int_0^{1-t}\beta_rdr\right)^2 \\
  &\le Cd^2(\bar{\alpha}_n)^{-14}e^{4(3c_0+2c_1)(\bar{\alpha}_n)^{-1}}\left(\int_0^{1-t}\beta_rdr\right)^2.
\end{align*}
Since $f(t)=e^{\frac{1}{2}\int_0^{1-t}\beta_rdr}/(e^{\int_0^{1-t}\beta_rdr} - 1)$
and $e^{\eta}-1\ge \eta$ for $\eta>0$, we see
\[
 \int_{t_1}^{t_2}f(t)^2\beta_{1-t}\left(\int_0^{1-t}\beta_rdr\right)^2dt
 \le \int_{t_1}^{t_2}\beta_{1-t}e^{\int_0^{1-t}\beta_rdr} dt = e^{\int_0^{1-t_1}\beta_rdr} - e^{\int_0^{1-t_2}\beta_rdr},
\]
from which we obtain
\[
 \mathbb{E}^*\int_{t_1}^{t_2}|Z_t^*|^2dt \le Cd^2(\bar{\alpha}_n)^{-14}e^{4(3c_0+2c_1)(\bar{\alpha}_n)^{-1}}
 \left(e^{\int_0^{1-t_1}\beta_rdr} - e^{\int_0^{1-t_2}\beta_rdr}\right).
\]
This completes the proof of the lemma.
\end{proof}

As an immediate consequence of Theorem \ref{lem:fbsde} and the classical correspondence
between forward--backward SDEs and semilinear parabolic equations,
the score function admits the following PDE characterization: 
\begin{cor}
Suppose that $(\mathrm{H1})$ holds and $p_{data}$ is positive on $\mathbb{R}^d$ and in $C^1(\mathbb{R}^d)$. 
Then if $\bar{\alpha}_n$ is sufficiently small, then the function 
\[
 u(t,x)=(u_1(t,x),\ldots,u_d(t,x))^{\mathsf{T}} = \nabla\log p_{1-t}(x), \quad (t,x)\in [0,1]\times\mathbb{R}^d, 
\]
satisfies the following system of parabolic PDE: 
\[
\left\{
\begin{aligned}
 &\partial_t u_k(t,x)+ \nabla u_k(t,x)^{\mathsf{T}}\left(\frac{\beta_{1-t}}{2}x + \beta_{1-t} u(t,x)\right) + \frac{\beta_{1-t}}{2} \Delta u_k(t,x)
  = \frac{\beta_{1-t}}{2} u_k(t,x), \\ 
 &\hspace*{12em}  t\in (t_i, t_{i+1}), \;\; x\in\mathbb{R}^d, \;\; i=0,\ldots,n-1, \;\; k=1,\ldots,d, \\ 
 & u(1,x) = \nabla\log p_{data}(x), \quad x\in\mathbb{R}^d. 
\end{aligned}
\right.
\]
\end{cor}

\begin{rem}
The PDE characterization above is not introduced as an independent result,
but rather as a reformulation of the forward--backward SDE representation.
Both descriptions convey the same structural information on the score function.
While the probabilistic formulation is convenient for analyzing error
propagation along sample paths, the PDE formulation may be more intuitive
from an analytical viewpoint.
\end{rem}

\section{Convergence analysis of the discrete-time DDPM}\label{sec:A}

The condition (H1) and Lemma \ref{lem:score_bdd} suggest that
it is natural to assume that the estimated score function $\mathbf{s}_i$ satisfies the same growth condition as that for $\nabla\log \mathbf{p}_i$.
\begin{enumerate}
\item[(H2)] The function $\mathbf{s}_i$ or $z_i$, $i=0,1,\ldots,n-1$, satisfies
\[
 |\mathbf{s}_i(x)| \le
 \frac{c_0}{\sqrt{\bar{\alpha}_i}} + \frac{c_1}{\bar{\alpha}_i}|x|,  \quad \text{a.e.}\; x\in\mathbb{R}^d, \;\; i=1,\ldots,n.
\]
\end{enumerate}

\begin{rem}
One might be concerned that (H2) is not automatically satisfied, as the function $\mathbf{s}_i$ is typically defined by a neural network.
However, since $c_0$ and $c_1$ can be taken arbitrarily large, the condition (H2) is practically harmless.
Theoretically, it is possible to redefine the function $\mathbf{s}_i$ in such a way that (H2) is fulfilled without increasing the learning error.
Indeed, consider
\[
 \tilde{\mathbf{s}}_i(x):= \mathbf{s}_i(x)1_{\{|\mathbf{s}_i(x)|\le B_i(x)\}} +
 \nabla\log\mathbf{p}_i(x) 1_{\{|\mathbf{s}_i(x)|>B_i(x)\}}, \quad x\in\mathbb{R}^d,
\]
where $B_i(x)=c_0/\sqrt{\bar{\alpha}_i} + (c_1/\bar{\alpha}_i)|x|$. It follows from Lemma \ref{lem:score_bdd} that
$|\tilde{\mathbf{s}}_i(x)|\le B_i(x)$ and $|\tilde{\mathbf{s}}_i(x)-\nabla\log\mathbf{p}_i(x)|\le |\mathbf{s}_i(x) - \nabla\log\mathbf{p}_i(x)|$.
\end{rem}

Next, we introduce the function $s$ defined by for $t\in (t_{i-1},t_i]$ with $i=1,\ldots,n$,
\[
 s(t, x) = - \frac{1+\sqrt{\alpha_i}}{2\sqrt{1-\bar{\alpha}_i}}z_i(x), \quad x\in\mathbb{R}^d,
\]
and $s(0,x)=0$.
Define $\widehat{X}_0=X_0^*$. For $i=0,1,\ldots,n-1$, with given $\widehat{X}_{t_i}$, there exists a unique strong solution
$\{\widehat{X}_t\}_{t_i\le t\le t_{i+1}}$ of the SDE
\[
 d\widehat{X}_t = \left[\frac{1}{2}\beta_{1-t}\widehat{X}_t+ \beta_{1-t}s(1-t_i,\widehat{X}_{t_i})\right]dt + \sqrt{\beta_{1-t}} dW^*_t
\]
on $(\Omega,\mathcal{F},\mathbb{F}^*,\mathbb{P}^*)$. Thus, $\widehat{X}_t$ satisfies
\[
 d\widehat{X}_t = \left[\frac{1}{2}\beta_{1-t}\widehat{X}_t+ \beta_{1-t}s(1-\tau_n(t),\widehat{X}_{\tau_n(t)})\right]dt + \sqrt{\beta_{1-t}} dW^*_t,
 \quad 0\le t\le 1
\]
with initial condition $\widehat{X}_0=X_0^*$, where $\tau_n(t)$ is such that $n\tau_n(t)$ is greatest integer not exceeding $nt$.
Moreover, on $[t_j,t_{j+1}]$,
\[
 \widehat{X}_t = e^{\frac{1}{2}\int_{t_j}^t\beta_{1-r}dr}\widehat{X}_{t_j} + \int_{t_j}^t\beta_{1-r}e^{\frac{1}{2}\int_r^t\beta_{1-u}du}s(1-t_j,\widehat{X}_{t_j})dr
  + \int_{t_j}^t\sqrt{\beta_{1-r}}e^{\frac{1}{2}\int_r^t\beta_{1-u}du}dW_r^*.
\]
In particular,
\begin{equation}
\label{eq:8}
 \widehat{X}_{t_{j+1}} = \frac{1}{\sqrt{\alpha_{n-j}}}\widehat{X}_{t_j} + 2s(1-t_j,\widehat{X}_{t_j})\frac{1-\sqrt{\alpha_{n-j}}}{\sqrt{\alpha_{n-j}}}
  + \sqrt{\frac{1-\alpha_{n-j}}{\alpha_{n-j}}}\widehat{\xi}_{j+1},
\end{equation}
where $\{\widehat{\xi}_i\}_{i=1}^n$ is an IID sequence with common distribution $N(0,I_d)$ under $\mathbb{P}^*$.
The process $\{\widehat{X}_{t_i}\}_{i=0}^n$ can be seen as an exponential integrator type approximation of $\{X_t^*\}_{0\le t\le 1}$ that appears in
continuous time formulation (see \cite{chen-etal:2023}, \cite{de-etal:2021}, \cite{lee-etal:2022}, and \cite{lee-etal:2023}).
Since
\[
 2s(t_i,x)(1-\sqrt{\alpha_i}) = -\frac{1-\alpha_i}{\sqrt{1-\bar{\alpha}_i}}z_i(x),
\]
setting $j=n-i$ in \eqref{eq:8}, we have
\begin{equation}
\label{eq:9}
 \mathbb{P}^*(\widehat{X}_{t_{n-i}})^{-1} =  \mathbb{P}(\mathbf{x}_i^{\ast})^{-1}, \quad i=1, \ldots, n.
\end{equation}

To estimate the other weak approximation errors, we adopt the total variation distance $D_{TV}$ defined by
\[
 D_{TV}(\mu,\nu)=\sup_{\|f\|_{\infty}\le 1}\left|\int_{\mathbb{R}^d}f(x)(\mu-\nu)(dx)\right|, \quad \mu,\nu\in\mathcal{P}(\mathbb{R}^d),
\]
where $\|f\|_{\infty}=\sup_{x\in\mathbb{R}^d}|f(x)|$.

Further, denote by $D_{KL}(\mu\, \|\, \nu)$ the Kullback-Leibler divergence or the relative entropy of $\mu\in\mathcal{P}(\mathbb{R}^d)$ with respect to
$\nu\in\mathcal{P}(\mathbb{R}^d)$. Let $p_t^*(x)$ be the density of $X_t^*$ under $\mathbb{P}^*$.
The theory of Schr{\"o}dinger bridges provides a way to estimate the reverse-time distributional error $D_{TV}(\mu_{data}, p_1^*(x)dx)$ in terms of
the forward-time one $D_{KL}(\phi(x)dx\,\|\, p_1(x)dx)$.
\begin{thm}
\label{lem:key_estimate0}
Suppose that $(\mathrm{H1})$ and $(\mathrm{H2})$ hold. Then we have
\[
 D_{TV}(\mu_{data}, p_1^*(x)dx)
 \le \sqrt{-\frac{1}{2}D_{KL}(\phi(x)dx\,\|\, p_1(x)dx) +
 \frac{m_{0,1}^2+m_{0,1}}{4(1-m_{0,1}^2)}(d + \mathbb{E}|\mathbf{x}_0|^2)}.
\]
\end{thm}
\begin{proof}
Let $P_1=\mathbb{P}^*(X_1^*)^{-1}=p_1^*(x)dx$ and $P_{01}=\mathbb{P}^*(X_0^*, X_1^*)^{-1}$. Then consider the
Schr{\"o}dinger's bridge problem
\begin{equation}
\label{eq:sbp}
 \inf D_{KL}(R\,\|\, P_{01}),
\end{equation}
where the infimum is taken over all $R\in\mathcal{P}(\mathbb{R}^d\times\mathbb{R}^d)$ such that $R(dx\times\mathbb{R}^d)=\phi(x)dx$ and
$R(\mathbb{R}^d\times dx)=\mu_{data}(dx)$.
It is known that this problem has a unique minimizer $R^*$,
provided that \eqref{eq:sbp} is finite (see, e.g., R{\"u}schendorf and Thomsen \cite{rus-tho:1993}).
To confirm this, we shall pick up the product measure $\phi(x)dx\otimes \mu_{data}$ to see
\begin{align*}
 &\log p_{data}(x) - \log p^*(0,y,1,x) \\
 &= - \log \frac{\phi(x)}{p_1(x)} + \log\sigma_{0,1}^d + \frac{1}{2(1-m^2_{0,1})}(m_{0,1}^2|x|^2 - 2m_{0,1}x^{\mathsf{T}}y + m^2_{0,1}|y|^2) \\
 &\le  - \log \frac{\phi(x)}{p_1(x)}  + \frac{m^2_{0,1}+m_{0,1}}{2(1-m^2_{0,1})}(|x|^2 + |y|^2), 
\end{align*}
whence
\begin{align*}
 D_{KL}(\phi(x)dx\otimes \mu_{data}\,\|\, P_{01})
 &= \int_{\mathbb{R}^d\times\mathbb{R}^d}\log \frac{p_{data}(y)}{p^*(0,x,1,y)} p_{data}(y)\phi(x)dxdy \\
 &\le  \frac{m_{0,1}^2+m_{0,1}}{2(1-m_{0,1}^2)}(d + \mathbb{E}|\hat{\mathbf{x}}_0|^2) < \infty. 
\end{align*}
Therefore, since the optimal $P^*$ of course satisfies $R^*(\mathbb{R}^d\times dx)=\mu_{data}(dx)$, we obtain
\begin{align*}
 D_{TV}(\mu_{data}, P_1)&\le D_{TV}(R^*, P_{01})\le \sqrt{\frac{1}{2}D_{KL}(R^*\,\|\, P_{01})} \\
 &\le \sqrt{\frac{1}{2}D_{KL}(\phi(x)dx\otimes \mu_{data}\,\|\, P_{01})} \\
  &\le \sqrt{-\frac{1}{2}D_{KL}(\phi(x)dx\,\|\,p_1(x)dx) + \frac{m_{0,1}^2+m_{0,1}}{4(1-m_{0,1}^2)}(d + \mathbb{E}|\hat{\mathbf{x}}_0|^2)}
\end{align*}
where the second inequality follows from Pinsker's inequality (see, e.g., Tsybakov \cite{tsy:2009}).
\end{proof}

By combining Pinsker's inequality and Girsanov-Maruyama theorem, we see the following:
\begin{lem}
\label{lem:xhat_and_xast}
Under $(\mathrm{H1})$ and $(\mathrm{H2})$, we have
\begin{equation}
\label{eq:10}
 D_{TV}(\mathbb{P}^*(\widehat{X}_1)^{-1}, \mathbb{P}^*(X^*_1)^{-1})\le
   \frac{1}{2}\sqrt{\mathbb{E}^*\int_0^1\beta_{1-t}|s(1-t,X^*_t) - \nabla\log p_{1-t}(X_t^*)|^2dt}.
\end{equation}
\end{lem}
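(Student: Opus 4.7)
The plan is to lift the time-$1$ marginal comparison to a comparison of the full path-space laws and then evaluate the resulting relative entropy via a Girsanov change of measure. Write $\hat{\mu}$ and $\mu^*$ for the $\mathbb{P}^*$-laws of the continuous paths $(\widehat{X}_t)_{0\le t\le 1}$ and $(X^*_t)_{0\le t\le 1}$ on $C([0,1];\mathbb{R}^d)$. Since evaluation at $t=1$ is a measurable functional of the path, the data-processing inequality yields $D_{TV}(\mathbb{P}^*\widehat{X}_1^{-1},\mathbb{P}^*(X^*_1)^{-1}) \le D_{TV}(\hat{\mu},\mu^*)$, and Pinsker's inequality then gives $D_{TV}(\hat{\mu},\mu^*) \le \sqrt{D_{KL}(\mu^*\,\|\,\hat{\mu})/2}$, so it suffices to bound the relative entropy on path space.

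To compute $D_{KL}(\mu^*\,\|\,\hat{\mu})$, I would apply the Girsanov--Maruyama theorem. Both $X^*$ and $\widehat{X}$ are defined on $(\Omega,\mathcal{F},\mathbb{P}^*)$, driven by the same Brownian motion $W^*$, share the initial value $X_0^*$, and have identical diffusion coefficient $\sqrt{\beta_{1-t}}I_d$; they differ only in drift. Writing $b^*(t,x) = \tfrac{1}{2}\beta_{1-t}x + \beta_{1-t}\nabla\log p_{1-t}(x)$ for the drift of $X^*$ and, evaluated along the path of $X^*$, $\hat{b}(t,X^*) = \tfrac{1}{2}\beta_{1-t}X^*_t + \beta_{1-t}s(1-t,X^*_t)$ for the drift of $\widehat{X}$ (using that $s$ is piecewise constant in its first argument, so $s(1-t,\cdot)=s(1-\tau_n(t),\cdot)$ on each subinterval), the standard Girsanov identity for the KL divergence between two such diffusions gives
\[
 D_{KL}(\mu^*\,\|\,\hat{\mu}) = \tfrac{1}{2}\mathbb{E}^*\int_0^1 \beta_{1-t}^{-1}\bigl|b^*(t,X^*_t) - \hat{b}(t,X^*)\bigr|^2 dt = \tfrac{1}{2}\mathbb{E}^*\int_0^1 \beta_{1-t}\bigl|\nabla\log p_{1-t}(X^*_t) - s(1-t,X^*_t)\bigr|^2 dt.
\]
Plugging this into the Pinsker bound and taking square roots yields the claimed inequality.

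The main obstacle I expect is justifying the Girsanov change of measure rigorously, i.e., checking that the exponential local martingale obtained by shifting the drift by $\beta_{1-t}^{-1/2}(\nabla\log p_{1-t}(X^*_t)-s(1-t,X^*_t))$ is a true martingale, so that $\mu^*$ and $\hat{\mu}$ are mutually absolutely continuous with the expected density (the generalized version stated in \cite{lip-shi:2001} that accommodates path-dependent drifts is well-suited here). One would verify either Novikov's condition or a Beneš-type linear-growth criterion, using (H2), the linear-growth bound on $\nabla\log p_t$ from Lemma \ref{lem:score_bdd2}, and the second- and fourth-moment estimates for $X^*_t$ in Lemma \ref{lem:moment_xast}; a standard localization with stopping times of the form $\inf\{t: |X^*_t|\ge R\}$, followed by a dominated-convergence passage using these moment bounds, upgrades the local-martingale property to a genuine martingale and validates the KL identity. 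Once the change of measure is in place, the KL computation itself is the classical drift-squared formula and the remaining steps are algebraic.
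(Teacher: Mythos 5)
Your plan matches the paper's proof essentially step for step: pass to the path-space laws of $X^*$ and $\widehat{X}$ (both driven by $W^*$ from $X_0^*$), bound $D_{TV}$ via Pinsker's inequality applied to $D_{KL}(P^*\,\|\,\widehat{P})$, and evaluate that relative entropy as the Girsanov drift-squared integral, with a localization argument to validate the change of measure and the martingale property of $\int_0^t\sqrt{\beta_{1-u}}\,\kappa(u,X^*)^{\mathsf{T}}dW^*_u$. The paper's localization (following Liptser--Shiryaev, Theorem 7.18) uses $T_N=\inf\{t:\int_0^t\beta_{1-u}|\kappa(u,w)|^2du\ge N\}$ together with an auxiliary process $X^{*,N}$ rather than your proposed radius stopping time, and it invokes the moment bounds of Lemma \ref{lem:moment_xast} and the linear growth of $\kappa$ exactly as you anticipate; these are minor variations of the same idea.
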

\begin{proof}[Proof of Lemma $\ref{lem:xhat_and_xast}$]
We shall borrow the arguments in the proof of Theorem 7.18 in \cite{lip-shi:2001}.
Denote by $\mathbb{W}^d$ the space of $\mathbb{R}^d$-valued continuous functions on $[0,1]$.
Put $\widehat{P}=\mathbb{P}^*(\widehat{X})^{-1}\in\mathcal{P}(\mathbb{W}^d)$ and $P^*=\mathbb{P}^*(X^*)^{-1}\in\mathcal{P}(\mathbb{W}^d)$.
Define the function $\kappa$ by $\kappa(t,w)=\nabla\log p_{1-t}(w_t) - s(1-\tau_n(t), w_{\tau_n(t)})$ for $w=(w_t)\in\mathbb{W}^d$.
For any $N\ge 1$ consider the stopping time
\[
 T_N(w):= \inf\left\{t\ge 0: \int_0^t\beta_{1-u}|\kappa(u,w)|^2du\ge N\right\} \wedge 1.
\]
Since the function $s$ is piecewise constant, the equation
\begin{align*}
 X_t^{*, N} &= X^*_{t\wedge T_N(X^*)} + \int_0^t1_{\{T_N(X^*)<u\}}\left(\frac{1}{2}\beta_{1-u}X^{*,N}_u
  + \beta_{1-u}g_u(X^{*,N})\right)du \\
  &\quad + \int_0^t1_{\{T_N(X^*)<u\}}\sqrt{\beta_{1-u}}dW_u^*
\end{align*}
has a unique strong solution $\{X_t^{*,N}\}_{0\le t\le 1}$,
where $g_t(w) = s(1-\tau_n(t),w_{\tau_n(t)}) + 1_{\{T_N(w)\ge t\}}\kappa(t,w)$. Note that $X^{*,N}$ satisfies
$X_t^{*,N}=X_t^*$ on $\{t\le T_N(X^*)\}$. It is straightforward to see
\[
 dX_t^{*,N} = \left(\frac{1}{2}\beta_{1-t}X^{*,N}_t + \beta_{1-t}g_t(X^{*,N})\right)dt + \sqrt{\beta_{1-t}}dW_t^*.
\]
Then consider the process
\[
 \widetilde{W}^N_t := W^*_t + \int_0^t\sqrt{\beta_{1-r}}(g_r(X^{*,N}) - s(1-\tau_n(r), X_{\tau_n(r)}^{*,N})) dr.
\]
By the definition of $T_N$,
\[
 \int_0^1\beta_{1-r}|g_r(X^{*,N}) - s(1-\tau_n(r), X_r^{*,N})|^2dr
 = \int_0^{T_N(X^*)}\beta_{1-r}|\kappa(r,X^*)|^2dr \le N,
\]
whence Novikov's condition is satisfied.
So we can apply Girsanov-Maruyama theorem to see that $\widetilde{W}^N$ is a Brownian motion under $\widetilde{\mathbb{P}^N}$ defined by
\begin{align*}
 \frac{d\widetilde{\mathbb{P}}^N}{d\mathbb{P}^*} &= \exp\Big[ -\int_0^1\sqrt{\beta_{1-t}}(g_t(X^{*,N}) - s(1-\tau_n(t), X_{\tau_n(t)}^{*,N}))^{\mathsf{T}}d W^*_t \\
   &\hspace*{5em} - \frac{1}{2}\int_0^1\beta_{1-t}|g_t(X^{*,N}) - s(1-\tau_n(t), X_{\tau_n(t)}^{*,N})|^2dt\Big].
\end{align*}
Then $X^{*,N}$ satisfies
\[
 dX_t^{*,N} = \left[\frac{1}{2}\beta_{1-t}X^{*,N}_t + \beta_{1-t}s(1-\tau_n(t),X_{\tau_n(t)}^{*,N})\right]dt + \sqrt{\beta_{1-t}}d\widetilde{W}^N_t.
\]
By the strong uniqueness, we have $\widehat{P}=\widetilde{\mathbb{P}}^N(X^{*,N})^{-1}$ for any $N$.
Hence, for $\Gamma\in\mathcal{B}(\mathbb{W}^d)$,
\begin{align*}
 \widehat{P}(\Gamma) &= \lim_{N\to\infty}\widehat{P}(\Gamma\cap \{T_N=1\}) = \lim_{N\to\infty}\widetilde{\mathbb{P}}^N(X^{*,N}\in \Gamma\cap\{T_N=1\}) \\
 &= \lim_{N\to\infty}\mathbb{E}^*\left[1_{\{X^{*,N}\in\Gamma\cap\{T_N=1\}\}}\frac{d\widetilde{\mathbb{P}}^N}{d\mathbb{P}^*}\right] \\
 &=\lim_{N\to\infty}\mathbb{E}^*\bigg[1_{\{X^{*,N}\in\Gamma\cap\{T_N=1\}\}} \\
  & \hspace*{5em} \times\exp\bigg(-\int_0^{T_N(X^{*,N})}\sqrt{\beta_{1-t}}\kappa(t, X^*)^{\mathsf{T}}d W^*_t
  - \frac{1}{2}\int_0^{T_N(X^{*,N})}\beta_{1-t}|\kappa(t, X^*)|^2dt \bigg)\bigg] \\
 &=\lim_{N\to\infty}\mathbb{E}^*\left[1_{\{X^*\in\Gamma\cap\{T_N=1\}\}}\exp\left(-\int_0^1\sqrt{\beta_{1-t}}\kappa(t, X^*)^{\mathsf{T}}d W^*_t
     - \frac{1}{2}\int_0^1\beta_{1-t}|\kappa(t,X^*)|^2dt \right)\right] \\
 &=\mathbb{E}^*\left[1_{\{X^*\in\Gamma\}}\exp\left(-\int_0^1\sqrt{\beta_{1-t}}\kappa(t,X^*)^{\mathsf{T}}d W^*_t
     - \frac{1}{2}\int_0^1\beta_{1-t}|\kappa(t,X^*)|^2dt \right)\right].
\end{align*}
Since $\{W^*_t\}$ is adapted to the augmented natural filtration $\mathbb{G}$ generated by $\{X^*_t\}$
and $\{\kappa(t,X^*)\}_{0\le t\le 1}$ is $\mathbb{G}$-adapted, as in the proof of Lemma 2.4 in \cite{kar-shr:1991},
we have
\[
 \int_0^1\sqrt{\beta}_{1-t}\kappa(t,X^*)^{\mathsf{T}}dW^*_t = \lim_{k\to\infty}\int_0^1(\kappa_t^{(k)})^{\mathsf{T}}d W^*_t
\]
holds almost surely possibly along subsequence for some
$\mathbb{G}$-adapted simple processes $\{\kappa_t^{(k)}\}_{0\le t\le 1}$, $k\in\mathbb{N}$. Thus,
there exists a $\mathcal{B}(\mathbb{W}^d)$-measurable map $\Phi$ such that
\[
 \Phi(X^*) = \exp\left(\int_0^1\sqrt{\beta_{1-t}}\kappa(t,X^*)^{\mathsf{T}}d W^*_t
     - \frac{1}{2}\int_0^1\beta_{1-s}|\kappa(t,X^*)|^2dt \right), \quad\mathbb{P}^*\text{-a.s.}
\]
This means
\begin{equation}
\label{eq:a.1}
\widehat{P}(\Gamma) = \mathbb{E}^*\left[1_{\{X^*\in\Gamma\}}\Phi(X^*)\right], \quad \Gamma\in\mathcal{B}(\mathbb{W}^d).
\end{equation}

Now, again by Pinsker's inequality,
\[
 D_{TV}(P^*, \widehat{P})^2\le \frac{1}{2} D_{KL}(P^*\,\|\, \widehat{P}),
\]
where by abuse of notation we have denoted the total variation distance and the KL-divergence on $\mathcal{P}(\mathbb{W}^d)$ by
$D_{TV}$ and $D_{KL}$, respectively.

Lemma \ref{lem:moment_xast} means $\sup_{0\le t\le 1}\mathbb{E}^*|X_t^*|^2<\infty$. So again by the linear growth of $\kappa$
the It{\^o} integral $\int_0^t\sqrt{\beta_{1-u}}\kappa(u,X^*)dW_u^*$ is a $\mathbb{P}^*$-martingale.
Hence using \eqref{eq:a.1} we find
\begin{align*}
 D_{KL}(P^*\,\|\, \widehat{P}) &= \int_{\mathbb{W}^d}\log\frac{d P^*}{d\widehat{P}}d P^*
  = \int_{\mathbb{W}^d}(-\log \Phi(w))\mathbb{P}^*(X^*)^{-1}(dw) \\
 &=\frac{1}{2}\mathbb{E}^*\int_0^1\beta_{1-t}|\kappa(t,X^*)|^2 dt.
\end{align*}
Thus the lemma follows.
\end{proof}

As for the right-hand side in \eqref{eq:10}, we have
\begin{equation}
\label{eq:11}
\begin{aligned}
 &\mathbb{E}^*\int_0^1\beta_{1-t}|s(1-t,X^*_t) - \nabla\log p_{1-t}(X_t^*)|^2dt \\
 &\le 2\sum_{i=0}^{n-1}\mathbb{E}^*|s(1-t_i,X_{t_i}^*)-\nabla\log p_{1-t_i}(X_{t_i}^*)|^2 \int_{t_i}^{t_{i+1}}\beta_{1-t}dt \\
 &\quad + 2\mathbb{E}^*\int_0^1\beta_{1-t}\left|
   \nabla\log p_{1-\tau_n(t)}(X^*_{\tau_n(t)}) - \nabla\log p_{1-t}(X_t^*)\right|^2dt.
\end{aligned}
\end{equation}
To estimate the first term of the right-hand side in \eqref{eq:11}, we start with observing a relation of $\mathbb{E}|s(t,X_t)-\nabla \log p_t(X_t)|^2$ and
the noise estimating objective. For a fixed $i$ we have
\begin{align*}
 \mathbb{E}\mathbf{s}_i(X_{t_i})^{\mathsf{T}}\nabla\log p_{t_i}(X_{t_i}) & = \int_{\mathbb{R}^d}\mathbf{s}_i(y)^{\mathsf{T}}(\nabla\log p_{t_i}(y))p_{t_i}(y)dy \\
 &= \int_{\mathbb{R}^d}\mathbf{s}_i(y)^{\mathsf{T}}\nabla\left(\int_{\mathbb{R}^d}p(0,x,t_i,y)\mu_{data}(dx)\right)dy \\
 &= \int_{\mathbb{R}^d}\int_{\mathbb{R}^d}\mathbf{s}_i(y)^{\mathsf{T}}\frac{\nabla_y p(0,x,t_i,y)}{p(0,x,t_i,y)}p(0,x,t_i,y)dy\mu_{data}(dx) \\
 &= \int_{\mathbb{R}^d}\mathbb{E}\left[\left.\mathbf{s}_i(X_{t_i})^{\mathsf{T}}\nabla_y\log p(0,x,t_i,X_{t_i})\right| X_0=x\right]\mu_{data}(dx) \\
 &= \mathbb{E}\left[\mathbf{s}_i(X_{t_i})^{\mathsf{T}}\nabla_y\log p(0,X_0,t_i,X_{t_i})\right],
\end{align*}
where $\nabla_y f$ stands for the gradient of $f$ with respect to the variable $y$ and for simplicity we have denoted
$\nabla_y\log p(0,X_0,t_i,X_{t_i})=\nabla_y\log p(0,X_0,t_i,y)|_{y=X_{t_i}}$.
Thus
\begin{align*}
 &\mathbb{E}|\mathbf{s}_i(X_t) - \nabla\log p_{t_i}(X_{t_i})|^2 \\
 &= \mathbb{E}|\mathbf{s}_i(X_{t_i}) - \nabla_y\log p(0,X_0,t_i,X_{t_i})|^2 + \mathbb{E}\left[|\nabla\log p_{t_i}(X_{t_i})|^2- |\nabla_y\log p(0,X_0,t_i,X_{t_i})|^2\right].
\end{align*}
Using \eqref{eq:3}, we get
\[
 \nabla_y\log p(0,X_0,t_i,X_{t_i}) = -\frac{1}{\sigma^2_{0,t_i}}(X_{t_i} - m_{0,t_i}X_0)
 \sim -\frac{1}{\sqrt{1-\bar{\alpha}_i}}Z_i.
\]
This together with definition of $\mathbf{s}_i$ and \eqref{eq:4} leads to
\begin{align*}
 \mathbb{E}|\mathbf{s}_i(X_{t_i}) - \nabla_y\log p(0,X_0, t_i, X_{t_i})|^2
 =\mathbb{E}|\mathbf{s}_i(\mathbf{x}_i) - \nabla\log \mathbf{p}_i(\mathbf{x}_i|\mathbf{x}_0)|^2
 = \frac{1}{1-\bar{\alpha}_i}\mathbb{E}\left|z_i(\mathbf{x}_i) - Z_i\right|^2,
\end{align*}
whence \eqref{eq:2} follows.

Put $\alpha_{min}=\min_{1\le i\le n}\alpha_i$. 
The following lemma is obtained by estimating  $\mathbb{E}^*|s(1-t,X_t^*) - \nabla\log p_{1-t}(X_t^*)|^2$
in terms of $\mathbb{E}|s(1-t,X_{1-t}) - \nabla\log p_{1-t}(X_{1-t})|^2$:
\begin{lem}
\label{lem:key_estimate}
Under $(H1)$ and $(H2)$, there exists a positive constant $\delta_2<\min(\delta_1, 1/4)$ such that if $\bar{\alpha}_n<\delta_2$ then
\[
 \sum_{i=0}^{n-1}\mathbb{E}^*|s(1-t_i,X_{t_i}^*)-\nabla\log p_{1-t_i}(X_{t_i}^*)|^2\int_{t_i}^{t_{i+1}}\beta_{1-t}dt
 \le C (-n\log\alpha_{min})(\bar{\alpha}_n)^{-2}\sqrt{dL}.
\]
\end{lem}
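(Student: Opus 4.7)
The plan is to proceed in four steps: (i) rewrite the expectation under $\mathbb{P}^*$ as an expectation under $\mathbb{P}$ via the transition-density formula from Lemma \ref{lem:density_repre}; (ii) apply a Cauchy--Schwarz split to expose a factor of $\sqrt{L_{n-i}^{\mathrm{sm}}}$, where $L_i^{\mathrm{sm}}:=\mathbb{E}|\mathbf{s}_i(\mathbf{x}_i)-\nabla\log\mathbf{p}_i(\mathbf{x}_i)|^2$; (iii) control both resulting factors using (H1), (H2), Lemma \ref{lem:score_bdd2}, and standard moment estimates on the forward process; and (iv) sum over $i$ via a discrete Cauchy--Schwarz to recover $\sqrt{L}$.

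For steps (i) and (ii), combining Lemma \ref{lem:density_repre} with the identity $e^{\frac{d}{2}\int_0^{t}\beta_{1-u}du}q(0,x,t,y) = p(1-t,y,1,x)$ (a direct consequence of $m_{1-t,1} = \exp(-\tfrac12\int_{1-t}^1\beta_s ds)$) gives $p^*(0,x,t_i,y) = p_{1-t_i}(y)\,p(1-t_i,y,1,x)/p_1(x)$. Integrating against the initial law $\phi(x)dx$ of $X^*$ and using the tower property yields
\begin{equation*}
\mathbb{E}^*|s(1-t_i,X^*_{t_i}) - \nabla\log p_{1-t_i}(X^*_{t_i})|^2
= \mathbb{E}\Bigl[f(X_{1-t_i})\,\phi(X_1)/p_1(X_1)\Bigr],
\end{equation*}
with $f(y):=|s(1-t_i,y)-\nabla\log p_{1-t_i}(y)|^2$; this is precisely where the discrepancy between the initial distributions of $X^*$ and of the time-reversal $\{X_{1-t}\}$ is absorbed into the Radon--Nikodym factor $\phi(X_1)/p_1(X_1)$. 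The Cauchy--Schwarz bound $\mathbb{E}[UV]\le(\mathbb{E} U)^{1/2}(\mathbb{E} UV^2)^{1/2}$ then gives
\begin{equation*}
\mathbb{E}^* f(X^*_{t_i}) \le \sqrt{\mathbb{E} f(X_{1-t_i})}\;\sqrt{\mathbb{E}\bigl[f(X_{1-t_i})(\phi(X_1)/p_1(X_1))^2\bigr]}.
\end{equation*}

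For step (iii), the relation $s(1-t_i,\cdot) = \tfrac{1+\sqrt{\alpha_{n-i}}}{2}\mathbf{s}_{n-i}(\cdot)$, the identity $X_{1-t_i}\stackrel{d}{=}\mathbf{x}_{n-i}$, and (H2) give $\mathbb{E} f(X_{1-t_i})\le 2L_{n-i}^{\mathrm{sm}} + C(1-\alpha_{n-i})d/\bar{\alpha}_n^2$. The second factor is controlled via the linear-growth bound $f(y)\le C(1+|y|^2)/\bar{\alpha}_n^2$ from (H2) and Lemma \ref{lem:score_bdd2}: one more Cauchy--Schwarz reduces it to the fourth moment $\mathbb{E}|X_{1-t_i}|^4 = O(d^2)$ (which follows from the sub-Gaussian tails of $\mu_{data}$ forced by (H1)) together with the density-ratio moment $\mathbb{E}(\phi/p_1)^4(X_1) = \int \phi^4/p_1^3\,dx$. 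This last integral is the heart of the argument: (H1) yields $p_{data}\propto e^{-\frac12 x^{\mathsf{T}}Qx-U(x)}$ with $U$ globally Lipschitz, so that $p_1$, as the rescaled convolution of $\mu_{data}$ with a Gaussian of variance $1-\bar{\alpha}_n$, admits a Gaussian lower bound $p_1(x)\ge c\,\phi(x)$ uniformly in $x$ once $\bar{\alpha}_n<\delta_1$ for some $\delta_1\in(0,1/4)$. The per-step estimate that emerges is $\mathbb{E}^*f(X^*_{t_i})\le C\bar{\alpha}_n^{-1}\sqrt{d\,L_{n-i}^{\mathrm{sm}}}$ plus a lower-order correction.

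For step (iv), since $\int_{t_i}^{t_{i+1}}\beta_{1-t}dt = -\log\alpha_{n-i}\le -\log\alpha_{\min}$ and $\bar{\alpha}_n^{-1}\le\bar{\alpha}_n^{-2}$, summing the per-step estimate and applying the discrete Cauchy--Schwarz inequality
\begin{equation*}
\sum_{i=0}^{n-1}\sqrt{L_{n-i}^{\mathrm{sm}}} \le \Bigl(n\sum_{j=1}^n L_j^{\mathrm{sm}}\Bigr)^{1/2} = n\sqrt{L}
\end{equation*}
yields the claimed bound, with the lower-order correction absorbed into a term of the same order as the time-discretization contribution in Theorem \ref{thm:1}. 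The main obstacle is the estimate of $\int\phi^4/p_1^3\,dx$ in step (iii); this is where (H1) and the smallness threshold $\bar{\alpha}_n<\delta_1$ are indispensable, since one must convert the sub-Gaussian tail of $p_{data}$ into a pointwise Gaussian lower bound on $p_1$ to keep the density-ratio moment finite uniformly in $n$.
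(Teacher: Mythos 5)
Your reduction of $\mathbb{E}^*f(X^*_{t_i})$ to $\mathbb{E}\bigl[f(X_{1-t_i})\,\phi(X_1)/p_1(X_1)\bigr]$ via Lemma~\ref{lem:density_repre} is correct, and the subsequent double Cauchy--Schwarz to expose $\sqrt{L}$ and a density-ratio moment is a legitimate alternative to the paper's route, which instead splits the domain at a threshold $\lambda$ on the marginal ratio $p_t^*/p_{1-t}$ and then optimizes over $\lambda$. Your step (iv) (sum plus discrete Cauchy--Schwarz) is also fine.

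The gap is in step (iii): the claimed uniform Gaussian lower bound $p_1(x)\ge c\,\phi(x)$ is \emph{false} under (H1) alone. Take $\mu_{data}=N(0,Q^{-1})$, which satisfies (H1) with $U\equiv 0$. Then $X_1\sim N(0,\,\bar\alpha_n Q^{-1}+(1-\bar\alpha_n)I_d)$, so
\[
 \frac{p_1(x)}{\phi(x)} = \det\!\left(\bar\alpha_n Q^{-1}+(1-\bar\alpha_n)I_d\right)^{-1/2}
 \exp\!\left(-\tfrac12 x^{\mathsf T}\!\left[(\bar\alpha_n Q^{-1}+(1-\bar\alpha_n)I_d)^{-1}-I_d\right]x\right).
\]
Whenever $Q$ has an eigenvalue $>1$, the covariance $\bar\alpha_n Q^{-1}+(1-\bar\alpha_n)I_d$ has an eigenvalue $<1$ \emph{for every} $\bar\alpha_n\in(0,1)$, so the exponent is a negative definite quadratic form along some direction and $p_1(x)/\phi(x)\to 0$ as $|x|\to\infty$. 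Taking $\bar\alpha_n$ small does not help, and consequently $\int \phi^4/p_1^3\,dx$ is not bounded by a constant depending only on $c_0,c_1,\mathbb{E}|\mathbf{x}_0|^2$ via this route. (Note also that (H1) does not \emph{imply} $p_{data}\propto e^{-\frac12 x^{\mathsf T}Qx-U(x)}$ with $U$ Lipschitz; the remark in the paper only shows that such densities satisfy (H1).)

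What (H1) actually yields, and what the paper proves in Step~(ii) of its own proof, is the non-uniform estimate
\[
 p_1(x)\ge c\,\sigma^{-d}\exp\!\bigl(-(m/\sigma^2)|x|^2\bigr)\,\phi(x),\qquad m=\sqrt{\bar\alpha_n},\ \sigma^2=1-\bar\alpha_n,
\]
whose exponent $m/\sigma^2\to 0$ as $\bar\alpha_n\to 0$. This, not a uniform bound, is what $\delta_1$ buys: once $\bar\alpha_n<\delta_1$ the resulting factor $e^{c\,(m/\sigma^2)|x|^2}$ is still integrable against $\phi$, and the exponential Gaussian moments of $X_{1-t}$ (provided by Step~(i) of the paper's proof via $\mathbb{E}[e^{c|\mathbf{x}_0|^2}]<\infty$) remain finite. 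If you replace your uniform lower bound with this non-uniform one and carry the $e^{c(m/\sigma^2)|x|^2}$ weight through your two Cauchy--Schwarz steps, the argument closes, with essentially the same bookkeeping as the paper's threshold-and-optimize step.
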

\begin{proof}[Proof of Lemma $\ref{lem:key_estimate}$]
Hereafter, we shall often write $\sigma = \sigma_{0,1}$ and $m=m_{0,1}$ for notational simplicity.
Step (i). We shall confirm that under (H1)
\begin{equation}
\label{eq:exp_integrability}
 \mathbb{E}[e^{c|\mathbf{x}_0|^2}] < \infty
\end{equation}
for some $c>0$.

Let $x_0\in\mathbb{R}^d$ be fixed such that $\log p_{data}(x_0)>0$. By Taylor's theorem and (H1), for almost every $x\in\mathbb{R}^d$ with $p_{data}(x)>0$,
\begin{align*}
 &\log p_{data}(x) - \log p_{data}(x_0) + \frac{1}{2}(x-x_0)^{\mathsf{T}}Q(x-x_0) \\
  &= \int_0^1(\nabla\log p_{data}(x_0+t(x-x_0)) + Q(x_0+t(x-x_0)))^{\mathsf{T}}(x-x_0)dt \\
 &\le c_0|x-x_0|,
\end{align*}
whence
\[
 \log p_{data}(x)\le \log p_{data}(x_0) - \frac{1}{2}\lambda_{min}|x-x_0|^2 + c_0|x-x_0| \le C_0 - \frac{1}{4}\lambda_{min}|x|^2,
\]
where $\lambda_{min}>0$ is the minimum eigenvalue of $Q$ and $C_0>0$ is a constant.
Hence for $c\in (0, \lambda_{min}/4)$
\begin{align*}
 \mathbb{E}[e^{c|\mathbf{x}_0|^2}] \le e^{C_0} \int_{\mathbb{R}^d}e^{(c - \lambda_{min}/4)|x|^2}dx = e^{C_0}(2\pi)^{d/2} (\lambda_{min}/2 - 2c)^{-d/2}<\infty.
\end{align*}
Thus \eqref{eq:exp_integrability} follows.

Step (ii). We will show that
\begin{equation}
\label{eq:p_ast_and_p}
 p^*_{1-t}(x)\le C\exp\left(\frac{2m}{\sigma^2}|x|^2\right) p_t(x).
\end{equation}
To this end, use the change-of-variable formula to get
\[
 e^{\frac{d}{2}\int_t^1\beta_rdr}\int_{\mathbb{R}^d}\frac{\phi(y)}{p_1(y)} q(0,y,1-t,x)dy
 = \int_{\mathbb{R}^d}\frac{\phi(\sigma_{t,1}z + m_{t,1}x)}{p_1(\sigma_{t,1}z + m_{t,1}x)}\frac{e^{-|z|^2/2}}{(2\pi)^{d/2}}dz.
\]
Put $w=\sigma_{t,1}z+m_{t,1}x$. Then
\begin{align*}
 \frac{p_1(w)}{\phi(w)} &=\sigma^{-d} \exp((1/2)(1- 1/\sigma^2)|w|^2)
  \int_{\mathbb{R}^d}\exp\left((m/\sigma^2)w^{\mathsf{T}}x^{\prime} - (m^2/(2\sigma^2))|x^{\prime}|^2\right)\mu_{data}(dx^{\prime}) \\
  &\ge \sigma^{-d}\exp(-(m^2/(\sigma^2))|w|^2) \mathbb{E}[e^{-(m/(\sigma^2))|\mathbf{x}_0|^2}].
\end{align*}
Put $\eta=\sqrt{2\mathbb{E}|\mathbf{x}_0|^2}$. By Chebyshev's inequality,
\begin{align*}
 \mathbb{E}[e^{-(m/(\sigma^2))|\mathbf{x}_0|^2}] &\ge \mathbb{E}[e^{-(m/(\sigma^2))|\mathbf{x}_0|^2}1_{\{|\mathbf{x}_0|<\eta\}}]
 \ge e^{-(m/(\sigma^2))\eta^2}\mathbb{P}(|\mathbf{x}_0|<\eta) \\
 &=  e^{-(m/(\sigma^2))\eta^2}(1 - \mathbb{P}(|\mathbf{x}_0|\ge \eta))
 \ge e^{-(m/(\sigma^2))\eta^2}(1 - \mathbb{E}|\mathbf{x}_0|^2/(\eta^2)) \\
 &=\frac{1}{2} e^{-(m/(\sigma^2))\eta^2}.
\end{align*}
If $4m/(\sigma^2) < 1/2$ then
\begin{align*}
 \frac{p^*_{1-t}(x)}{p_t(x)}
 &\le 2\sigma^{d} e^{(m/(\sigma^2))\eta^2} e^{(2m/(\sigma^2))|x|^2}
  \int_{\mathbb{R}^d}e^{(2m/(\sigma^2))|z|^2}\phi(z)dz \\
 &\le  2 e^{(m/(\sigma^2))\eta^2} e^{(2m/(\sigma^2))|x|^2} \left(1- \frac{4m}{\sigma^2}\right)^{-d/2} \\
 &\le 2 e^{(m/(\sigma^2)) \eta^2+4dm/(\sigma^2) }e^{(2m/(\sigma^2))|x|^2}
\end{align*}
where we have used $(1-\kappa)^{-1/2}<e^{\kappa}$ for $0<\kappa<1/2$.
So there exists $\delta_1>0$ such that if $\bar{\alpha}_n<\delta_1$ then
$4m/(\sigma^2) < 1/2$ and $e^{(m/(\sigma^2)) \eta^2+4dm/(\sigma^2)) }\le 2$.
This means
\[
 \frac{p^*_{1-t}(x)}{p_t(x)} \le Ce^{(2m/(\sigma^2))|x|^2} .
\]
Thus \eqref{eq:p_ast_and_p} follows.

Step (iii). Put $\theta(1-t, x) = s(1-t,x)-\nabla\log p_{1-t}(x)$. Then by Step (ii), for any $\lambda>0$,
\begin{align*}
 \mathbb{E}^*|\theta(1-{t_i}, X_{t_i}^*)|^2
 &=  \int_{\{p_t^*/p_{1-t} \le \lambda\}}|\theta(1-t_i, x)|^2 p_t^*(x)dx + \int_{\{p_t^*/p_{1-t} > \lambda\}}|\theta(1-t_i,x)|^2 p_t^*(x)dx \\
 &\le \lambda \mathbb{E}|\theta(1-t_i,X_{1-t_i})|^2 + \frac{1}{\lambda}\mathbb{E}[|\theta(1-t_i,X_{1-t_i})|^2 e^{(2m/(\sigma^2))|X_{1-t_i}|^2}].
\end{align*}
By (H2), for sufficiently small $\epsilon>0$,
\begin{align*}
 \mathbb{E}[e^{\epsilon |X_t|^2}] &=  \int_{\mathbb{R}^d}\int_{\mathbb{R}^d} e^{\epsilon |x|^2}p(0,\xi,t,x)dxp_{data}(\xi)d\xi
  = \int_{\mathbb{R}^d}\int_{\mathbb{R}^d} e^{\epsilon |m_{0,t}\xi +\sigma_{0,t}z|^2}\phi(z)dzp_{data}(\xi)d\xi \\
 &\le \int_{\mathbb{R}^d}\int_{\mathbb{R}^d} e^{2\epsilon (|\xi|^2 + |z|^2)}\phi(z)dzp_{data}(\xi)d\xi
 = \mathbb{E}[e^{2\epsilon |\mathbf{x}_0|^2}] (1-4\epsilon)^{-d/2} \\
 &\le e^{4\epsilon d} \mathbb{E}[e^{2\epsilon |\mathbf{x}_0|^2}]
\end{align*}
as well as
\begin{align*}
 \mathbb{E}[|X_t|^2e^{\epsilon |X_t|^2}] & = \int_{\mathbb{R}^d}\int_{\mathbb{R}^d}|x|^2e^{\epsilon |x|^2}p(0,\xi,t,x)dxp_{data}(\xi)d\xi \\
& = \int_{\mathbb{R}^d}\int_{\mathbb{R}^d}|m_{0,t}\xi +\sigma_{0,t}z|^2 e^{\epsilon |m_{0,t}\xi +\sigma_{0,t}z|^2}\phi(z)dzp_{data}(\xi)d\xi \\
&\le 2\int_{\mathbb{R}^d}\int_{\mathbb{R}^d}(|\xi|^2 + |z|^2) e^{2\epsilon (|\xi|^2 +|z|^2)}\phi(z)dzp_{data}(\xi)d\xi \\
&= 2\mathbb{E}[|\mathbf{x}_0|^2e^{2\epsilon |\mathbf{x}_0|^2}](1-4\epsilon)^{-d/2}
     + 2d\mathbb{E}[e^{2\epsilon |\mathbf{x}_0|^2}](1-4\epsilon)^{-d/2-1} \\
&\le 2d e^{4(d+2)\epsilon}\mathbb{E}[(1+|\mathbf{x}_0|^2)e^{2\epsilon |\mathbf{x}_0|^2}]
\end{align*}
where again we have used $(1-\kappa)^{-1/2}<e^{\kappa}$ for $0<\kappa<1/2$.
Note that $\lim_{\epsilon\to 0}\mathbb{E}[(1+|\mathbf{x}_0|^2)e^{2\epsilon |\mathbf{x}_0|^2}]=1+\mathbb{E}|\mathbf{x}_0|^2$, and so
there exists $\delta_2\le \delta_1$ such that if $\bar{\alpha}_n<\delta_2$ then
\[
 e^{8(d+2)(m/(\sigma^2))}\mathbb{E}[(1+|\mathbf{x}_0|^2)e^{(2m/(\sigma^2)) |\mathbf{x}_0|^2}]\le 2+\mathbb{E}|\mathbf{x}_0|^2.
\]
Thus, by Lemma \ref{lem:score_bdd2} and (H2),
\begin{align*}
 \mathbb{E}[|\theta(1-t_i,X_{1-t_i})|^2 e^{(2m^2/(\sigma^2))|X_{1-t_i}|^2}]
 &\le \frac{C}{m_{1-t_i}^4}\mathbb{E}[(1+|X_{1-t_i}|^2)e^{(2m/(\sigma^2))|X_{1-t_i}|^2}] \\
 &\le Cd(\bar{\alpha}_n)^{-2}
\end{align*}
provided that $\bar{\alpha}_n<\delta_2$.

Consequently,
\begin{align*}
 \sum_{i=0}^{n-1}\mathbb{E}^*|\theta(1-t_i, X_{t_i}^*)|^2\int_{t_i}^{t_{i+1}}\beta_{1-t}dt
 &\le C\sum_{i=0}^{n-1}(-\log\alpha_{n-i})\left(\lambda \mathbb{E}|\theta(1-t_i,X_{1-t_i})|^2 + \frac{Cd(\bar{\alpha}_n)^{-2}}{\lambda}\right) \\
 &\le C(-n\log\min_{1\le i\le n}\alpha_i)F(\lambda),
\end{align*}
where
\[
 F(\lambda) = \lambda L + \frac{Cd(\bar{\alpha}_n)^{-2}}{\lambda}.
\]
It is elementary to find that $\lambda^*=\argmin_{\lambda>0}F(\lambda)$ is uniquely given by
$\lambda^* = \sqrt{Cd(\bar{\alpha}_n)^{-2}/L}$,
whence $\min_{\lambda>0}F(\lambda)=C\sqrt{d(\bar{\alpha}_n)^{-2}L}$.
\end{proof}

Here is an error estimation result for the DDPMs. 
\begin{thm}
\label{thm:1}
Suppose that $(\mathrm{H1})$ and $(\mathrm{H2})$ hold. Then there exist a constant $C>0$, only depending on $c_0$, $c_1$, and $\mathbb{E}|\mathbf{x}_0|^2$,
and a constant $\delta>0$ such that
if $\bar{\alpha}_n<\delta$ then
\begin{equation}
\label{eq:main_err_bound}
\begin{aligned}
 &D_{TV}(\mu_{data}, \mathbb{P}(\mathbf{x}_0^*)^{-1}) \\
 &\le C\sqrt{d\sqrt{\bar{\alpha}_n} +
 \sqrt{d}(-n\log\alpha_{min})(\bar{\alpha}_n)^{-1}\sqrt{L}
 + d^2e^{c_2(\bar{\alpha}_n)^{-1}}n(\log\alpha_{\min})^2},
 \end{aligned}
\end{equation}
where $c_2=12c_0+8c_1+1$.
\end{thm}

As we see below, the 1st term of the right-hand side in \eqref{eq:main_err_bound} comes from the Langevin error, i.e.,
the distributional difference between $\mathbf{x}_n$ and a standard Gaussian random variable.
The 2nd term comes from the score estimation process, and the 3rd term is the time discretization error for the reverse-time SDE.

There exists a trade-off with respect to $\bar{\alpha}_n$ on the right-hand sides of Theorem \ref{thm:1}.
To clarify the convergence of the distributional distance, we describe the behavior of the decreasing noise schedule parameter $\alpha_i$
over time steps and simplify the right-hand side expressions.
\begin{cor}
\label{cor:2}
Suppose that $(\mathrm{H1})$ and $(\mathrm{H2})$ hold. Suppose moreover that
\begin{equation}
\label{eq:schedule}
 \frac{\gamma_1\log\log\log n}{n}\le -\log\alpha_i\le \frac{\gamma_2\log\log\log n}{n}, \quad i=1,\ldots,n,
\end{equation}
for some $\gamma_1,\gamma_2>0$.
Then for any $\varepsilon>0$ there exists $n_0\in\mathbb{N}$ such that for any $n\ge n_0$
\begin{align*}
 &D_{TV}(\mu_{data}, \mathbb{P}(\mathbf{x}_0^*)^{-1}) \\
 &\le C\sqrt{d(\log\log n)^{-\gamma_1/2} +
 \sqrt{d}\gamma_2(\log\log n)^{\gamma_2+1}\sqrt{L}
 + d^2\gamma_2^2 n^{-(1-\varepsilon)}(\log\log\log n)^2}
 \end{align*}
for some constant $C>0$, only depending on $c_0$, $c_1$, and $\mathbb{E}|\mathbf{x}_0|^2$.
\end{cor}

\begin{rem}
In \cite{ho-etal:2020}, the case of $n=1000$ is examined and
the variances $1-\alpha_i$ of the forward process are set to be increasing linearly from $1-\alpha_1 = 10^{-4}$  to $1-\alpha_n = 0.02$.
Thus \eqref{eq:schedule} holds with $\gamma_1=0.15$ and $\gamma_2=30.67$.
Given that these constants have plausible values, the condition \eqref{eq:schedule} aligns with the practical noise schedules used in DDPMs.
\end{rem}

\begin{proof}[Proof of Theorem $\ref{thm:1}$]
Step (i).  Put $\sigma=\sigma_{0,1}$ and $m=m_{0,1}$ for notational simplicity. Observe
\[
 \frac{m^2 + m}{2(1-m^2)}(d + |y|^2) \le 2m(d + |y|^2)
\]
if $m^2=\bar{\alpha}_n\le 1/2$.
This together with Theorem \ref{lem:key_estimate0} yields
\begin{equation}
\label{eq:1st_err}
 D_{TV}(\mu_{data},p^*_1(x)dx) \le Cd^{1/2}\bar{\alpha}_n^{1/4}.
\end{equation}

Step (ii).
Lemma \ref{lem:key_estimate} tells us that if $\bar{\alpha}_n<\delta_1$ then
\[
 \sum_{i=0}^{n-1}\mathbb{E}^*|s(1-t_i,X_{t_i}^*) - \nabla\log p_{1-t_i}(X_{t_i}^*)|^2\int_{t_i}^{t_{i+1}}\beta_{1-t}dt
 \le C\sqrt{d} (-n\log\alpha_{min})(\bar{\alpha}_n)^{-2} \sqrt{L}.
\]
To estimate the second term of the right-hand side in \eqref{eq:11}, observe
\begin{equation}
\label{eq:100}
 \mathbb{E}^*\int_0^1\beta_{1-t}\left|
   \nabla\log p_{1-\tau_n(t)}(X^*_{\tau_n(t)}) - \nabla\log p_{1-t}(X_t^*)\right|^2dt
= \sum_{i=0}^{n-1}\int_{t_i}^{t_{i+1}}\beta_{1-t}\mathbb{E}^*|Y_{t_i}^* - Y^*_t|^2dt.
\end{equation}
For $t\in [t_i, t_{i+1})$, $i=0,1,\ldots,n-1$, by Lemma \ref{lem:fbsde},
\[
 \mathbb{E}^*|Y^*_{t_i}-Y_t^*|^2 \le \frac{1}{2}\mathbb{E}^*\left|\int_{t_i}^t\beta_{1-r}Y^*_rdt\right|^2
  + 2\int_{t_i}^t\mathbb{E}^*|Z_r^*|^2dr
  \le \frac{1}{2n}\int_{t_i}^t\beta_{1-r}^2\mathbb{E}^*|Y_r^*|^2dr + 2\int_{t_i}^t\mathbb{E}^*|Z_r^*|^2dr.
\]
By Lemmas \ref{lem:score_bdd2} and \ref{lem:moment_xast},
\begin{align*}
 \mathbb{E}^*|Y_r^*|^2 &\le \frac{2c_0^2}{m_{0,1-r}^2} + \frac{2c_1^2}{m_{0,1-r}^4}\mathbb{E}^*|X_r^*|^2
 \le \frac{2c_0^2}{m_{0,1-r}^2} + \frac{Cd}{m_{0,1-r}^4}(\bar{\alpha}_n)^{-1/2}e^{2(c_0+c_1)(\bar{\alpha}_n)^{-1}}
\end{align*}
and so
\begin{align*}
 \int_{t_i}^{t_{i+1}}\beta_{1-r}^2\mathbb{E}^*|Y_r^*|^2dr
 &\le C(-n\log\alpha_{n-i})\int_{t_i}^{t_{i+1}}\beta_{1-r}e^{\int_0^{1-r}\beta_udu}dr \\
 &\quad + Cd (-n\log\alpha_{n-i})(\bar{\alpha}_n)^{-1/2}e^{2(c_0+c_1)(\bar{\alpha}_n)^{-1}}\int_{t_i}^{t_{i+1}}\beta_{1-r} e^{2\int_0^{1-r}\beta_udu}dr \\
 &\le Cd(-n\log\alpha_{min})(\bar{\alpha}_{n})^{-1/2}e^{2(c_0+c_1)(\bar{\alpha}_n)^{-1}}
   \left(e^{\int_0^{1-t_i}\beta_rdr} - e^{\int_0^{1-t_{i+1}}\beta_rdr}\right).
\end{align*}
Further, by Theorem \ref{lem:fbsde}, if $\bar{\alpha}_n<\delta_2\le \delta_1$ then
\[
 \int_{t_i}^{t_{i+1}}\mathbb{E}^*|Z_r^*|^2dr
 \le Cd^2 (\bar{\alpha}_n)^{-14}e^{4(3c_0+2c_1)(\bar{\alpha}_n)^{-1}}\left(e^{\int_0^{1-t_i}\beta_rdr} - e^{\int_0^{1-t_{i+1}}\beta_rdr}\right).
\]
Therefore, there exists $\delta\in (0, \delta_2)$ such that if $\bar{\alpha}_n<\delta_3$ then the right-hand side in \eqref{eq:100} is at most
\[
 Cd^2n(\log\alpha_{min})^2e^{(12c_0+8c_1+1)(\bar{\alpha}_{n})^{-1}}.
\]

Step (iii).
We have
\[
 D_{TV}(\mathbb{P}(\mathbf{x}_0^*)^{-1},\mu_{data}) \le  A_1 + A_2,
\]
where $A_1 = D_{TV}(\mathbb{P}^*(X_1^*)^{-1}, \mu_{data})$ and
$A_2 = D_{TV}(\mathbb{P}^*(\widehat{X}_1)^{-1}, \mathbb{P}^*(X_1^*)^{-1})$.
It follows from Steps (i) and (ii) that if $\bar{\alpha}_n< \delta$ then $A_1^2\le Cd\sqrt{\bar{\alpha}_n}$ and
\[
 A_2^2 \le C\sqrt{d}(-n\log\alpha_{min})(\bar{\alpha}_n)^{-2} \sqrt{L}
  + Cd^2e^{(12c_0+8c_1+1)(\bar{\alpha}_{n})^{-1}}n(\log\alpha_{min})^2.
\]
Thus Theorem \ref{thm:1} follows.
\end{proof}

A proof of Corollary \ref{cor:2} is elementary, so omitted.

\subsection*{Acknowledgements}

This study is supported by JSPS KAKENHI Grant Number JP24K06861.

\bibliographystyle{amsplain}
\bibliography{../mybib}

\end{document}